\def\pmod #1{\ ({\rm{mod}}\ #1)}
\def\Z{\Bbb Z}
\def\M{\Bbb M}
\def\bg{\bigg}
\def\({\bg(}
\def\){\bg)}
\def\t{\text}
\def\eq{\equiv}
\def\T{\overline}
\def\B{\tilde}
\def\M{\pmod}
\theoremstyle{plain}
\newtheorem{theorem}{Theorem}
\newtheorem{problem}{Problem}
\newtheorem{lemma}{Lemma}
\newtheorem{corollary}{Corollary}
\newtheorem{definition}{Definition}
\theoremstyle{remark}
\begin{document}

\title[Balanced paring of $\{1,2,\ldots,(p-1)/2\}$ for $p\eq 1 \pmod{4}$]{Balanced paring of $\{1,2,\ldots,(p-1)/2\}$ for $p\eq 1 \pmod{4}$}

\
\author[Chao Huang]{Chao Huang}
\address{(Chao Huang) Department of Mathematics\\ Nanjing University\\
Nanjing 210093, People's Republic of China}

\email{DG1921004@smail.nju.edu.cn}

\date{}

\begin{abstract}
Let $p\eq1 \pmod{4}$ be a prime. Write $t = \prod_{x=1}^{(p-1)/2}x$. Since $t ^2\eq -1 \pmod{p}$ , we can divide $\{1,2,\ldots,(p-1)/2\}$ into $(p-1)/4$ ordered pairs so that each pair, say $<a,\B{a}>$ , satisfies that $t a \eq \pm \B{a} \pmod{p}.$ For any two such pairs, assume $a<\B{a}, b<\B{b}, a<b $, then there are three possibilities for their relative order : $a<\B{a} < b< \B{b}$ , $a< b < \B{a} < \B{b}$ , $a< b < \B{b}< \B{a}$. We show this paring is balanced in the sense that the three cases occur with equal frequencies. Utilizing properties of this paring we solve one problem raised by Zhi-Wei Sun concerning the sign of permutation related to quadratic residues.
\end{abstract}

\subjclass[2020]{Primary 11A07, 05A05;  Secondary 11A15.}

\keywords{Permutations, residue systems , congrunce.}

\maketitle

\section{Introduction}
\setcounter{lemma}{0}
\setcounter{theorem}{0}
\setcounter{corollary}{0}
\setcounter{remark}{0}
\setcounter{equation}{0}
\setcounter{conjecture}{0}

Let p be an odd prime and $\{ \}_p $ denote the least nonnegative residue modulo p. For any integer x we define
$$\T{x} =
\begin{cases} p- \{ x\}_p   &\t{when}\ \  \{ x\}_p> p/2 ,
\\ \{ x\}_p &\t{when}\ \ \{ x\}_p< p/2.
\end{cases} \\$$

 Thus $ \T{x}=\T{-x}=\T{p\pm x}$. Note that addition under bar doesn't work well. For example, when $p=13$, we have $\T{\T{4}+\T{8}}= \T{4+5}=4\neq 1= \T{4+8}.$ However, if we define $x\ast y = \T{xy}$ then $\T{x}\ast \T{y} = x\ast y$ for any $x,y\in Z$ and $\{1,2,\ldots,(p-1)/2\}$ is made into a multiplicative group. It can be viewed as quotient group of multiplicative group $ (\Z / p \Z)^*$ by $\{1, -1\}.$ Hence it's cyclic and generated by any primitive root g of p in it.

 We'd like to call $\{1,2,\ldots,(p-1)/2\}$ the group of positive residues as Conway refer those $x$ with $\{x\}_p <p/2$ as 'positive modulo p'.(\cite[p. 128]{C})

Our work is motivated by one of Zhi-Wei Sun's conjectures.(\cite[p. 280]{S19}). The case for $p\eq 3 \pmod{4}$ is easy as we'll show in Section 4.
\begin{problem}
Let p be an odd prime and $m\in Z $ with $p\nmid m.$ Let $s'(p)$ be the number of pairs $(i,j)$ such that $1\leq i < j \leq (p-1)/2$ and  $\T{ mi^2} > \T{mj^2}.$ Prove that $s'(p) \eq \lfloor \frac{p+1}{8} \rfloor  \pmod{2}$, where $ \lfloor \ \rfloor$ is the floor function.
\end{problem}

When $p\eq 1 \pmod{4}$ we find that certain combinatoric structure of $\{1,2,\ldots,(p-1)/2\}$ is essential. Write $t = \prod_{x=1}^{(p-1)/2}x$. It's well known by Wilson's theorem $t^2\eq -1 \pmod{p}$. If we define a permutation $\tau$ of this group of positive residues by $\tau(x) = \T{tx}$, then $\tau(\tau(x))= \T{t^2x}=\T{-x}=x$ for $ x \in \{1,2,\ldots,(p-1)/2\}$. Then we divide the group of positive residues into $(p-1)/4$ ordered pairs as follows.
\begin{definition}
Let $p\eq 1 \pmod{4}$ be a prime and $t = \prod_{x=1}^{(p-1)/2}x$. Define $V = \{ x | 1\leq x \leq (p-1), \  x < \T{tx} \},$
and for each $x\in V$ ,let $\B{x}= \T{tx}$.

We call the set of ordered pairs $\{ <x,\B{x}> | x\in V \}$ the paring of $ \{1,2,\ldots,(p-1)/2\}$, or simply the paring for $(p-1)/2.$
\end{definition}

 We always let $a,b$ denote elements of V and refer to $\B{a}$ as partner of $a$ and vice versa. Here is an example for $ p=29 , t=12.$
\renewcommand\arraystretch{1.2}
\begin{center}
\begin{tabular}{|c|c|c|c|c|c|c|c|}\hline
\ $a$ & 1 & 2 & 3& 4& 6 & 8 & 11  \\ \hline
\ $\B{a} $ & 12 & 5 & 7& 10 & 14 & 9& 13   \\  \hline
\end{tabular}
\end{center}

The reason to consider such a paring is that $ \{a^2\}_p + \{\B{a}^2\}_p = p$ and $ \T{a^2} = \T{\B{a}^2}$ since $ \B{a}^2 \eq (ta)^2 \eq -a^2 \pmod{p}$ so that it's reasonable to put $a$ and $\B{a}$ together in solving our problem. This pairing turns out to have elegant properties. For example we may form an interesting table.
\renewcommand\arraystretch{1.2}
\begin{center}
\begin{tabular}{|c|c|c|c|c|c|c|c|}\hline
\ $\B{a}-a$ & 11 & 3 & 4 &  6& 7 & 1 & 2  \\ \hline
\ $\T{\B{a}+a} $ & 13 & 7 & 10& 14 & 9 & 12 & 5    \\ \hline
\end{tabular}
\end{center}

Compare the two tables we immediately make an observation that this table can be obtained from former one by permutation of columns. Moreover, for two such pairs, there are three possibilities for their relative order as the figure shows :
$$[\bigcirc, \triangle,\bigcirc,\triangle], \  [\bigcirc, \bigcirc,\triangle,\triangle], \   [\bigcirc, \triangle,\triangle,\bigcirc].$$
Since by definition $a< \B{a}$ for each $a \in V$, we define
$$\begin{cases} X = \{(a,b) \in V \times V | \ a< b < \B{a} < \B{b} \},   \\
   Y = \{(a,b) \in V \times V | \  a< \B{a} < b < \B{b} \},   \\
    Z = \{(a,b) \in V \times V | \  a< b < \B{b} < \B{a} \}.  \\
\end{cases}$$

In Section 2 we first show that $ |X| \eq s'(p) \pmod{2} $ and solve Problem 1 utilizing above observation. Since this proof
involves Jacobi Sums of quartic character, we try to find a more elementary proof. This leads to the amazing discovery that the pairing is balanced in the sense that $|X| = |Y| = |Y|$, which enables us to give a second combinatoric proof in Section 3.

\section{}

 From now on we always assume $p \eq 1 \pmod{4}$ be a prime number except in the last section. Recall that $t = \prod_{x=1}^{(p-1)/2}x$ so that $t^2 \eq -1 \pmod{p}$. And for each pair $< a, \B{a}>$, we have $ a < \B{a}, a= \T{t\B{a}}, \B{a}= \T{ta}.$

 First we show that the sets $X, Y , Z $ defined in the last section is essential in solving Problem 1.
 \begin{theorem}
 Let $p \eq 1 \pmod{4} $ be an odd prime and $m\in Z $ with $p\nmid m.$ Let $s'(p)$ be the number of pairs $(i,j)$ such that $1\leq i < j \leq (p-1)/2$ and
 $\T{ mi^2} > \T{mj^2}.$ Define $X = \{(a,b) \in V \times V |  \ a< b < \B{a} < \B{b} \}$. Then $$ s'(p) \eq |X| \pmod{2}.  $$

 \end{theorem}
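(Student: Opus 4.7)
The plan is to write $v_i := \T{mi^2}$ and interpret $s'(p)$ as the number of inversions of the sequence $v_1,v_2,\ldots,v_{(p-1)/2}$, then group these inversions according to the pairing of $V$ and reduce modulo $2$ to a classification by the three interleaving types $X,Y,Z$.

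First I would record two identities. Since $\B{a}\eq\pm ta\pmod{p}$, squaring gives $\B{a}^2\eq t^2a^2\eq -a^2\pmod{p}$, hence $v_a=v_{\B{a}}$ for every $a\in V$. Moreover, if $a,b\in V$ lie in distinct pairs then $v_a\neq v_b$: otherwise $ma^2\eq\pm mb^2\pmod{p}$ would force $b\eq\pm a\pmod{p}$ or $b\eq\pm ta\pmod{p}$, which inside $\{1,\ldots,(p-1)/2\}$ means $b\in\{a,\B{a}\}$, a contradiction.

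Next I would partition the ordered pairs $(i,j)$ with $1\le i<j\le (p-1)/2$ into those with $\{i,j\}=\{a,\B{a}\}$ for some $a\in V$, which contribute $0$ to $s'(p)$ since $v_a=v_{\B{a}}$, and those whose two coordinates lie in different pairs. For each unordered pair of pairs $\{\{a,\B{a}\},\{b,\B{b}\}\}$ with $a,b\in V$ and $a<b$, there are exactly four crossing ordered pairs $(i,j)$ with $i<j$. Setting $\alpha=v_a=v_{\B{a}}$ and $\beta=v_b=v_{\B{b}}$ (so $\alpha\neq\beta$), the number of these four that are inversions depends only on the interleaving type: in case $X$ the count is $3$ if $\alpha>\beta$ and $1$ if $\alpha<\beta$, hence always odd; in case $Y$ the count is $4$ or $0$, always even; in case $Z$ the count is $2$ in both subcases, again even. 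Summing over all unordered pairs of pairs, only those $(a,b)\in X$ contribute modulo $2$, yielding $s'(p)\eq|X|\pmod{2}$.

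The only technical step is the four-row tabulation under each of the three interleavings, which is a short finite check; I do not foresee a serious obstacle. The crucial conceptual point is that the identity $\B{a}^2\eq -a^2\pmod{p}$ forces $v_a=v_{\B{a}}$, collapsing the inversion count on $\{1,\ldots,(p-1)/2\}$ onto the coarser combinatorics of the pairing, and the three classes $X,Y,Z$ were designed precisely to enumerate how two pairs can interleave.
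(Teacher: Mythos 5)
Your proposal is correct and follows essentially the same route as the paper: it discards the within-pair comparisons using $\T{ma^2}=\T{m\B{a}^2}$, then tabulates the four cross-pair inversions by interleaving type, finding odd counts ($3$ or $1$) only in case $X$ and even counts in cases $Y$ and $Z$, exactly as in the paper's table. Your explicit observation that $v_a\neq v_b$ for distinct pairs is a small point the paper leaves implicit, but otherwise the arguments coincide.
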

 \begin{proof}
 For any $a \in V$, pair $<a,\B{a}>$ is not counted by $s'(p)$ since $\T{ma^2} =\T{m\B{a}^2}$

 Thus for any pair $(i,j)$ counted by $s'(p)$ we have $i,j\in \{a,\B{a},b,\B{b}\}$ for different $a,b \in V.$ Assume $a < b.$
 We list all the possibility for $(i,j)$ to be inverse under two assumptions. For example , the item 'None' means when $(a,b) \in Y $ and $\T{ ma^2} <
 \T{mb^2}$, any pair from $ \{a,\B{a},b,\B{b}\}$ is not counted by $s'(p)$.

 \renewcommand\arraystretch{1.3}
 \begin{tabular}{|c|c|c|}\hline
 (i,j) & \ when $\T{ ma^2} > \T{mb^2}$  &  when $\T{ ma^2} < \T{mb^2}$ \\  \hline
 \ X:\ $a< b < \B{a} < \B{b}$ & $(a,b), (a,\B{b}),(\B{a},\B{b})$    & $ (b,\B{a})$  \\ \hline
 \ Y:\ $a< \B{a} < b < \B{b}$ &  $(a,b), (a,\B{b}),(\B{a},\B{b})$ ,$ (\B{a},b) $  &  None    \\ \hline
 \ Z:\ $a< b < \B{b} < \B{a}$ &  $(a,b),(a,\B{b})$ & $ (b,\B{a}),(\B{b},\B{a})$ \\ \hline
 \end{tabular} \\

 From the table we notice that only for those $(a,b) \in X$ , there are odd number of inverse pairs. Hence the conclusion easily follows. \\
 \end{proof}

Thus it suffices to determine the parity of $|X|$. We give another characterization. Write $L_p = (p-1)(p-5)/96$ for a prime $p\eq1 \pmod{4}$.

\begin{theorem}
Let $p \eq 1 \pmod{4}$ be a prime. Fix any primitive root $g$ of $p$. Then
\begin{equation}
 \prod_{1\leq i<j\leq(p-1)/2 \atop i^2+j^2 \neq 0 \pmod{p} } (j^2- i^2) \eq  (-1)^{|X|+L_p}\prod_{r,s=1 \atop r\neq s }^{(p-1)/4} (g^{4r}-g^{4s}) \pmod{p}.
 \end{equation}
\end{theorem}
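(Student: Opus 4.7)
The plan is to group the factors of the left-hand side of (2.1) according to the paring. First observe that for $1\le i<j\le (p-1)/2$, the relation $i^2+j^2\equiv 0\pmod p$ is equivalent to $j\equiv\pm ti\pmod p$, which in this range means exactly that $\{i,j\}=\{a,\B{a}\}$ for some $a\in V$. Hence every pair $(i,j)$ that contributes to the left-hand side has $i$ and $j$ drawn from two distinct paired couples $\{a,\B{a}\}$ and $\{b,\B{b}\}$, where we may take $a<b$ in $V$. Removing the two paired couples from the six two-element subsets of $\{a,\B{a},b,\B{b}\}$ leaves four admissible pairs $(i,j)$, so the full product factorises as a product over $(a,b)\in X\cup Y\cup Z$ of four-factor contributions.

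I will then carry out the case analysis. Using $\B{a}^2\equiv -a^2$ and $\B{b}^2\equiv -b^2\pmod p$, each of the four factors $j^2-i^2$ reduces modulo $p$ to $\pm(b^2-a^2)$ or $\pm(a^2+b^2)$, and their product collapses to $\pm(b^2-a^2)^2(a^2+b^2)^2\equiv\pm(b^4-a^4)^2\pmod p$. A direct check in each of the three orderings shows the sign is $-1$ exactly when $(a,b)\in X$ and $+1$ when $(a,b)\in Y\cup Z$: the asymmetry arises because the factor $\B{b}^2-\B{a}^2\equiv-(b^2-a^2)$ contributes a lone minus sign in case $X$, whereas in cases $Y$ and $Z$ it is cancelled by another minus coming from a different factor in the quadruple. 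Therefore
\[\text{LHS}\equiv (-1)^{|X|}\prod_{a,b\in V,\,a<b}(b^4-a^4)^2\pmod p.\]

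To finish, I will identify the residues $a^4\pmod p$ for $a\in V$ with the set $Q_4=\{g^{4r}:1\le r\le (p-1)/4\}$ of nonzero quartic residues. Since $\B{a}^4\equiv a^4\pmod p$ and $|V|=|Q_4|=(p-1)/4$, the map $a\mapsto a^4$ is a bijection from $V$ onto $Q_4$. Consequently $\prod_{a,b\in V,\,a<b}(b^4-a^4)^2$ equals the product of $(x-y)^2$ over unordered two-element subsets $\{x,y\}$ of $Q_4$, and unfolding this into the ordered product on the right-hand side of (2.1) introduces the sign $(-1)^{\binom{(p-1)/4}{2}}$. Since $\binom{(p-1)/4}{2}=(p-1)(p-5)/32=3L_p\equiv L_p\pmod 2$, the combined sign is $(-1)^{|X|+L_p}$, as required. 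The main obstacle is simply to keep the three-case sign computation transparent; everything else is routine bookkeeping.
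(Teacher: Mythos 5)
Your proposal is correct and follows essentially the same route as the paper: group the factors $j^2-i^2$ by pairs of paring couples, check the three orderings $X,Y,Z$ to extract the sign $(-1)^{|X|}$, identify $\{a^4: a\in V\}$ with the quartic residues, and unfold the squared product into the ordered one at the cost of $(-1)^{\binom{(p-1)/4}{2}}=(-1)^{3L_p}$. The only stylistic difference is that you justify the bijection $a\mapsto a^4$ by a counting argument while the paper notes directly that $b^2\not\equiv\pm a^2\pmod p$ for distinct $a,b\in V$; both are fine.
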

\begin{proof}
Each two numbers $1\leq i<j \leq (p-1)/2$ with $i^2+j^2 \neq 0 \pmod{p}$ must belong to two different ordered pairs in our paring.
We list all the possibilities in the table. For example when $ i=a,j=\B{b}$ for $(a,b)\in X$, the term in the second row below $(a,\B{b})$
 reads $ j^2-i^2 \eq -b^2-a^2 \pmod{p}.$ \\

\renewcommand\arraystretch{1.2}
\begin{tabular}{|c|c|c|c|c|c|}\hline
\ X:\ $a< b < \B{a} < \B{b}$ & $(i,j)$ & $(a,b)$ & $(a,\B{b})$ & $(b,\B{a})$ & $ (\B{a},\B{b})$  \\ \hline
\  $\pmod{p}$ & $j^2-i^2 $ & $ b^2-a^2$ & $ -b^2-a^2$ & $ -a^2-b^2$ & $  -b^2+a^2$  \\ \hline
\ Y:\ $a< \B{a} < b < \B{b}$ & $(i,j)$ & $(a,b)$ & $(a,\B{b})$ & $(\B{a},b)$ & $ (\B{a},\B{b})$  \\ \hline
\  $\pmod{p}$  & $j^2-i^2 $ & $b^2-a^2$ & $ -b^2-a^2$ & $ b^2+a^2 $ & $ -b^2+a^2$  \\ \hline
\ Z:\ $a< b < \B{b} <\B{a} $ &  $(i,j)$ & $ (a,b)$ & $(a,\B{b})$ & $(b,\B{a})$ & $ (\B{b},\B{a})$\\ \hline
\  $\pmod{p}$     & $j^2-i^2$ & $ b^2- a^2$ & $ -b^2-a^2$ & $ -a^2-b^2$ & $ -a^2+b^2$  \\ \hline
\end{tabular} \\

Multiply the terms from each even row. It's clear that any two ordered pairs $<a,\B{a}> , <b,\B{b}>$ with $a < b$ contribute to the product
on the left of (2.2) by a factor equals :
 $$\begin{cases} \  -(b^2-a^2)^2(b^2+a^2)^2   &\t{when}\ (a,b)\in X  ,
 \\ \  (b^2-a^2)^2(b^2+a^2)^2 &\t {when}\  (a,b) \in Y \cup Z.
 \end{cases} \\$$

Thus
$$ \prod_{1\leq i<j\leq(p-1)/2 \atop i^2+j^2 \neq 0 \pmod{p} } (j^2- i^2) \eq (-1)^{|X|}\prod_{ a\neq b \in V } (a^4-b^4)^2 \pmod{p}. $$

 Moreover for any different $a,b \in V $, we have $ b^2 \neq \pm a^2 \pmod{p}$ so that $ a^4 \neq b^4 \pmod{p}.$ Thus both
 $\{ a^4 | a\in V \}$ and $\{ g^{4r}| 1\leq r \leq (p-1)/4 \}$ are the same set of quartic residues modulo $p$. Therefore
 $$ \prod_{ a\neq b \in V } (a^4-b^4)^2 \eq \prod_{1\leq r < s \leq (p-1)/4} (g^{4r}-g^{4s})^2 \pmod{p}. $$

 It's also clear that
 $$ \displaystyle \prod_{1\leq r < s \leq (p-1)/4} (g^{4r}-g^{4s})^2  =
    (-1)^{\binom{(p-1)/4}{2}}\prod_{r,s=1 \atop r\neq s }^{(p-1)/4} (g^{4r}-g^{4s})$$.

Note that $ {\binom{(p-1)/4}{2}}= 3L_p .$ The proof is complete.  \\
\end{proof}

About the left side of (2.1) we have the following relation.
\begin{equation}
\prod_{1\leq<i<j\leq(p-1)/2 \atop i^2+j^2\neq 0 \pmod{p}} (j^2-i^2) = \prod_{1\leq i < j \leq (p-1)/2} (j^2-i^2) \ / \prod_{1\leq<i<j\leq(p-1)/2 \atop
i^2+j^2\eq 0 \pmod{p}} (j^2-i^2).  \\                                                                                                                       \end{equation}

The first product on the right is easy to evaluate.

\setcounter{lemma}{2}
\begin{lemma}((1.5) \cite{S19}, p. 248)  Let $p>3$ be a prime. Then
$$ \prod_{1\leq i < j \leq (p-1)/2} (j^2-i^2) =
\begin{cases} -\prod_{x=1}^{(p-1)/2}x \pmod{p} &\t {if}\ p\eq 1 \pmod{4} ;
\\ 1 \pmod{p} &\t {if}\ p\eq3 \pmod{4}.  \\
\end{cases}$$
\end{lemma}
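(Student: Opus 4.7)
My plan is to transform $A := \prod_{1 \le i < j \le n}(j^2 - i^2)$, where $n = (p-1)/2$, into an explicit product of factorials, and then invoke Wilson's theorem to pair those factorials into reciprocal pairs. Splitting $j^2 - i^2 = (j-i)(j+i)$ and grouping by the larger index $j$, I compute
\[
\prod_{1 \le i < j \le n}(j - i) = \prod_{j=2}^n (j-1)! \qquad \text{and} \qquad \prod_{1 \le i < j \le n}(j + i) = \prod_{j=2}^n \frac{(2j-1)!}{j!}.
\]
Multiplying the two and letting the internal factorials telescope gives the key identity
\[
A \cdot n! \;=\; 3! \cdot 5! \cdots (2n-1)! \;=\; \prod_{k=1}^{n-1}(2k+1)!.
\]

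Next I invoke the Wilson reflection $m! \cdot (p-1-m)! \equiv (-1)^{m+1} \pmod{p}$, which follows from $(p-1)! \equiv -1 \pmod p$ by replacing each $p-j$ with $-j$. For $m = 2k+1$ this specializes to $(2k+1)! \cdot (2(n-k-1)+1)! \equiv 1 \pmod p$, so in the product above I pair index $k$ with $n-k-1$. The extreme index $k = n-1$ has partner $0$, which lies outside the range $[1, n-1]$, but the isolated factor $(2n-1)! = (p-2)!$ is $\equiv 1 \pmod p$ by Wilson and drops out harmlessly. For $1 \le k < (n-1)/2$, the pair $(k, n-k-1)$ consists of two distinct indices and contributes $1 \pmod p$.

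The conclusion splits by parity of $n$. When $n$ is even (i.e.\ $p \equiv 1 \pmod 4$), every index in $\{1, \ldots, n-2\}$ participates in a genuine pair, so $A \cdot n! \equiv 1 \pmod p$; using $t^2 \equiv -1 \pmod p$ (Wilson in the form $((p-1)/2)!^2 \equiv -1$) and dividing by $t = n!$ yields $A \equiv 1/t \equiv -t \pmod p$. When $n$ is odd (i.e.\ $p \equiv 3 \pmod 4$), the middle index $k = (n-1)/2$ is self-paired and survives, contributing the unpaired factor $(2\cdot (n-1)/2 + 1)! = n! = t$; hence $A \cdot n! \equiv t \pmod p$ and $A \equiv 1 \pmod p$.

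The main obstacle is the careful bookkeeping of this Wilson pairing: recognizing that the outermost factor $(2n-1)!$ sits outside the pairing but is congruent to $1$ anyway, and that for odd $n$ a single self-paired middle factor $n!$ survives. Once these extremal terms are properly accounted for, the remaining computation is a short sequence of factorial cancellations.
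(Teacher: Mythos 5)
Your proof is correct. Note that the paper does not actually prove this lemma: it is quoted verbatim from Sun \cite{S19} (his identity (1.5)) with a citation and no argument, so there is no in-paper proof to compare against. Your derivation is a valid, self-contained elementary substitute. The factorization $\prod_{i<j}(j-i)=\prod_{j=2}^n(j-1)!$ and $\prod_{i<j}(j+i)=\prod_{j=2}^n (2j-1)!/j!$ is right, the telescoping to $A\cdot n!=\prod_{k=1}^{n-1}(2k+1)!$ checks out, and the Wilson reflection $m!\,(p-1-m)!\equiv(-1)^{m+1}\pmod p$ correctly pairs $(2k+1)!$ with $(2(n-k-1)+1)!$. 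Your handling of the two extremal terms is exactly where care is needed and you got both right: the factor at $k=n-1$ is $(p-2)!\equiv 1$, and the fixed point $k=(n-1)/2$ exists only for $n$ odd and contributes the single factor $n!=t$, which cancels against the $n!$ on the left to give $A\equiv 1$ for $p\equiv 3\pmod 4$; for $n$ even everything pairs off, giving $A\equiv 1/t\equiv -t$ via $t^2\equiv-1$. (I verified the result numerically for $p=5,7,13$.) For comparison, a more common route to this identity evaluates $\prod_{i\neq j}(i^2-j^2)$ as the discriminant of $\prod_{k=1}^{(p-1)/2}(x-k^2)\equiv x^{(p-1)/2}-1\pmod p$ and then extracts a square root with the correct sign; your approach avoids discriminants entirely at the cost of the factorial bookkeeping, and is arguably more in the elementary spirit the paper is aiming for in Section 3.
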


As for the second product on the right of (2.2), we observe that
\begin{equation}
 \prod_{1\leq<i<j\leq(p-1)/2 \atop i^2+j^2\eq 0 \pmod{p}} (j^2-i^2) = \prod_{a \in V} (\B{a}^2- a^2 ).
\end{equation}

This reminds us our discovery in the last section. That is, $< \B{a} - a,\T{\B{a}+a} >$ is also an ordered pair in our paring for any pair $<a,\B{a}>$.

Recall that for fixed $p\eq 1 \pmod{4}$ we define $ V = \{ x | 1\leq x \leq (p-1), \  x < \T{tx} \}. $
And for each $x\in V$ ,we define $\B{x}= \T{tx}$.   \\

\setcounter{theorem}{3}
\begin{theorem} \ \\

{\rm (i)}  For each $a\in V $, let $b= \B{a} - a$ ,then $b\in V$ and $ \B{b} = \T{\B{a}+a} $.         \\

{\rm (ii)} $\{ \B{a} -a \ | a \in V\} = V, \ \  \{ \T{\B{a}+a} \ | a\in V \} = \{1,2,\ldots,(p-1)/2\} \setminus V.\\$
\end{theorem}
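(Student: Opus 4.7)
For (i), I work modulo $p$. Since $\B a = \T{ta}$, there is a sign $\ve_a \in \{\pm 1\}$ with $\B a \eq \ve_a t a \pmod{p}$. Using $t^2 \eq -1 \pmod{p}$ I compute
\[ tb = t\B a - ta \eq \ve_a t^2 a - \ve_a \B a \eq -\ve_a(a+\B a) \pmod{p}, \]
which gives $\T{tb} = \T{a+\B a}$ at once. To finish, I verify $b \in V$: clearly $1 \ls b = \B a - a \ls (p-1)/2$ since $a < \B a$ and both lie in $\{1,\cs,(p-1)/2\}$; and to see $b < \T{a+\B a}$, I split on $s := a+\B a$. If $s \ls (p-1)/2$ then $\T s = s > \B a - a = b$; if $s \gs (p+1)/2$ then $\T s = p-s$, and $b < p - s$ reduces to $2\B a < p$, which is plain. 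So $\B b = \T{tb} = \T{a+\B a}$, as claimed.

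For (ii), the key point is that the map $\phi \colon V \to V$, $\phi(a) = \B a - a$ (well-defined by (i)), is a bijection. Granting this, the second equality follows from (i): $\T{a+\B a} = \B{\phi(a)}$, and the composition of the bijections $\phi \colon V \to V$ and $x \mapsto \B x \colon V \to \{1,\cs,(p-1)/2\}\sm V$ is the desired bijection. Since $V$ is finite, it suffices to prove $\phi$ is injective. Assume $\phi(a_1) = \phi(a_2) = b$ with $a_1,a_2 \in V$. Reducing $b = \B{a_i} - a_i$ modulo $p$ using $\B{a_i} \eq \ve_{a_i} t a_i$ gives
\[ b \eq (\ve_{a_1}t - 1) a_1 \eq (\ve_{a_2}t - 1) a_2 \pmod{p}. \]
If $\ve_{a_1} = \ve_{a_2}$, then $(\ve_{a_1}t-1)(a_1 - a_2) \eq 0 \pmod{p}$; the factor $\ve_{a_1}t - 1$ is nonzero modulo $p$ (since $t^2 \eq -1$ rules out $t \eq \pm 1$), so $a_1 = a_2$. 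If $\ve_{a_1} = -\ve_{a_2}$, combining the two congruences yields $\ve_{a_1} t (a_1 + a_2) \eq a_1 - a_2 \pmod{p}$; squaring and using $t^2 \eq -1$ gives $a_1^2 + a_2^2 \eq 0 \pmod{p}$, whence $a_1 \eq \pm t a_2 \pmod{p}$. Since $a_1 \in \{1,\cs,(p-1)/2\}$, this forces $a_1 = \B{a_2}$, which is impossible because $a_2 \in V$ means $a_2 < \B{a_2}$ and so $\B{a_2} \not\in V$.

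The delicate step is the mixed-sign case $\ve_{a_1} = -\ve_{a_2}$ in the injectivity argument: one has to notice that the combined congruence, after squaring, collapses to $a_1^2 + a_2^2 \eq 0 \pmod{p}$, i.e.\ exactly the defining relation for being partners in the pairing, and then exploit that both $a_1$ and $a_2$ lie in the ``lower half'' $V$ of their respective pairs to reach a contradiction.
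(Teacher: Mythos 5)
Your proof is correct. It follows the same overall skeleton as the paper's proof --- split on the sign $\ve_a$ with $\B a \eq \ve_a ta \pmod p$, and obtain (ii) by proving $a \mapsto \B a - a$ is injective on the finite set $V$ --- but the execution of part (i) is genuinely different and cleaner. The paper verifies $b \in V$ by comparing $\{(t-1)a\}_p$ with $\T{(t+1)a}$, which requires a further sub-case analysis on whether $\{(t+1)a\}_p$ exceeds $p/2$, and it only sketches the case $\ve_a = -1$ ("we can argue similarly"); you instead compute $\T{tb} = \T{a+\B a}$ first, uniformly in the sign, and then the membership $b < \T{tb}$ reduces to the two-line comparison of $\B a - a$ with $\T{a+\B a}$, needing only $a \gs 1$ and $2\B a < p$. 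This reorganization (compute the partner first, then check membership) removes the asymmetry between the two signs entirely. In the mixed-sign injectivity case your route also differs slightly: the paper uses the identity $(t-1)t \eq -(t+1) \pmod p$ to get $a_1 \eq t a_2$ directly, while you square to reach $a_1^2 + a_2^2 \eq 0 \pmod p$ and hence $a_1 \eq \pm t a_2$; both force $a_1 = \B{a_2}$, contradicting $a_1 \in V$ and $\B{a_2} \notin V$. The squaring step loses a sign, but that is harmless since $\T{\,\cdot\,}$ is sign-invariant, and it has the merit of exposing the conceptual point that the obstruction is exactly the partner relation $a_1^2 + a_2^2 \eq 0$.
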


\begin{proof}
For a fixed $a \in V$. By definition $\B{a} = \pm ta\mod {p}.$ If $\B{a}\eq ta \pmod{p}$, by definition we have $a< \B{a}=\{ta\}_p < p/2 $ so that $b = \{(t-1)a\}_p < \{ ta\}_p$.

 First we show $b= \B{a}-a \in V$. We need to check $b< \T{tb}$, i.e. $ \{(t-1)a\}_p < \T{t(t-1)a}=\T{(t+1)a}. $

  If $\{(t+1)a\}_p >p/2$ , then $\{(t-1)a\}_p < \{ta\}_p < \{(t+1)/p\}$ so that $ \{(t+1)a\}_p + \{(t-1)a\}_p = 2\{ta\}_p < p$. Thus $\T{(t+1)a} = p - \{(t+1)a\}_p > \{(t-1)a\}_p.$  Otherwise If $\{(t+1)a\}_p < p/2$ , then $\T{(t+1)a} = \{(t+1)a\}_p > \{(t-1)a\}_p$ since $ 2\{a\}_p < p $.

Next we prove $\B{b}= \T{a+\B{a}}$. Since $\B{a}\eq ta \M{p}$, multiply $ \{ta\}_p -a = b $ by t we have
$ -a - ta \eq -a - \B{a} \eq tb \M{p}$. So $\T{a+\B{a}} = \B{b}$.

If otherwise $\B{a}\eq -ta \M{p}$, we have $a< p - \{ta\}_p< p/2 $ so that $ p< 2\{ta\}_p$. We need to prove
$ \{ (-t-1)a\}_p < \T{t(-t-1)a} = \T{(1-t)a}.$ We can argue similarly. \\

Thus we establish (i) and that $\{ \B{a} -a | a \in V\} \subset V, \  \{ \T{\B{a}+a} \ | a\in V \} \subset \{1,2,\ldots,(p-1)/2\} \setminus V$. Then we assume $a,b \in V$ and $ \B{a}-a = \B{b}-b $, we claim $a=b$.

Let $V_1=\{ a \in V | \B{a} \eq at \pmod{p} \}$. If $a,b \in V_1 $ , then $(t-1)a \eq (t-1)b \pmod{p}$. So it's clear $a=b$. If $a,b \in V \setminus V_1$, it is also easy. Now suppose $a\in V_1$ but not b , i.e. $\B{a} \eq ta ,\  \B{b} \eq -ta \pmod{p}$, then $(t-1)a \eq (-t-1)b \pmod{p}$. Since  $(t-1)t\eq(-t-1)\pmod{p}$, we have $a\eq bt \eq -\B{b} \pmod{p}$ , which is impossible since $a,\B{b} < p/2$ by definition.

Hence $\{ \B{a} -a | a \in V\} $ must be identical to $V$. The second part of (ii) also follows since $\{1,2,\ldots,(p-1)/2\} \setminus V$ has as many elements as V.\\
\end{proof}

For any prime $p \eq 1 \pmod{4}$ , write $M_p = (p^2-1)/8$. It's clear $(-1)^{M_p} = (\frac{2}{p}).$

\setcounter{corollary}{4}
\begin{corollary}
{\rm (i)} $\sum_{a\in V} a = M_p/3 , \ \sum_{a\in V} \B{a} = 2M_p/3.$  \\

{\rm (ii)} Let $V_2=\{ a \in V | a+\B{a} > p/2 \}$. Then $ |V_2| \eq M_p \pmod{2}.$   \\

\end{corollary}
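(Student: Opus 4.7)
The plan is to extract both claims as consequences of the two bijections furnished by Theorem 2.4 (ii), namely $\{\B{a}-a : a\in V\} = V$ and $\{\T{\B{a}+a} : a\in V\} = \{1,\ldots,(p-1)/2\}\sm V$.

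First I would sum the first bijection: $\sum_{a\in V}(\B{a}-a) = \sum_{a\in V} a$ forces $\sum_{a\in V}\B{a} = 2\sum_{a\in V}a$. Combined with the trivial identity $\sum_{a\in V}a + \sum_{a\in V}\B{a} = \sum_{x=1}^{(p-1)/2} x = M_p$, this immediately yields (i); incidentally it also shows $3\mid M_p$ whenever $p\eq 1\pmod 4$.

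For (ii) I would sum the second bijection. For each $a\in V$ one has $2 \ls a + \B{a} \ls p-1$, so $\{a+\B{a}\}_p = a+\B{a}$; since $p$ is odd this never equals $p/2$, and thus $\T{a+\B{a}} = a+\B{a}$ when $a\notin V_2$ while $\T{a+\B{a}} = p-(a+\B{a})$ when $a\in V_2$. Summing,
$$\sum_{a\in V}\T{a+\B{a}} = M_p + p|V_2| - 2\sum_{a\in V_2}(a+\B{a}).$$
On the other hand the right-hand set $\{1,\ldots,(p-1)/2\}\sm V$ has sum $M_p - M_p/3 = 2M_p/3$ by (i). Equating the two expressions and reducing mod $2$ gives $|V_2| \eq M_p/3 \pmod 2$.

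The only step that requires a second's thought is the final parity identification $M_p/3 \eq M_p \pmod 2$, but this is automatic once (i) is in hand: $M_p - M_p/3 = 2M_p/3$ is an even integer, so $M_p$ and $M_p/3$ share parity. Hence $|V_2| \eq M_p \pmod 2$, as desired. I do not foresee any real obstacle; the whole argument is essentially two applications of Theorem 2.4 (ii) combined with elementary counting.
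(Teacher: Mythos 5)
Your proof is correct and takes essentially the same route as the paper: part (i) is obtained exactly as there by summing the bijection $\{\B{a}-a : a\in V\}=V$, and part (ii) by summing the complementary bijection $\{\T{\B{a}+a} : a\in V\}=\{1,\ldots,(p-1)/2\}\sm V$ and reducing modulo $2$. The paper's version of (ii) is merely a terser form of your computation, passing through $\sum_{a\in V}(\B{a}+a)\eq\sum_{a\in V}\T{\B{a}+a}+p|V_2|\pmod 2$ and the same final observation that $M_p/3\eq M_p\pmod 2$.
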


\begin{proof}
{\rm (i)} By Theorem 2.1 $ \sum_{a \in V} (\B{a} -a) = \sum_{a \in V} a$ so that $\sum_{a\in V} \B{a}=2 \sum_{a\in V} a$. On the other hand we have
$$\sum_{a \in V } a + \sum_{a \in V } \B{a} = 1+2+ \dots +(p+1)/2 = (p+1)(p-1)/8 = M_p.$$
Hence $ \sum_{a \in V} a = M_p/3 , \  \sum_{a\in V} \B{a} = 2M_p/3. $\\

{\rm (ii)} By definition we have $\B{a}+a = p- \T{\B{a}+a}$ for each $a \in V_2$ while $\B{a}+a = \T{\B{a}+a}$ for $a \in V\setminus V_2$. Thus
$$ \sum_{a \in V} (\B{a} +a) \eq \sum_{a\in V} (\T{\B{a}+a}) + p|V_2| \eq \sum_{a\in V} \B{a} +p|V_2| \pmod{2}, $$
in which the second congruence follows from Theorem 2.1 . Hence $|V_2| \eq \sum_{a \in V} a \eq M_p/3 \eq M_p \pmod{2}.$

\end{proof}

 Now we can evaluate the right side of (2.3) and hence the left side of (2.1).

\begin{corollary} With the same notations we have
$$\prod_{1\leq<i<j\leq(p-1)/2 \atop i^2+j^2\eq 0 \pmod{p}} (j^2-i^2) \eq \prod_{a \in V } (\B{a}^2 - a^2) \eq \(\frac{2}{p}\)\prod_{x=1}^{(p-1)/2}x \pmod{p}.
$$
\end{corollary}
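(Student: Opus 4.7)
The first congruence is just equation (2.4), so the real content is the evaluation
$$\prod_{a \in V}(\B{a}^2 - a^2) \equiv \left(\frac{2}{p}\right)\prod_{x=1}^{(p-1)/2}x \pmod{p}.$$
My plan is to factor $\B{a}^2 - a^2 = (\B{a}-a)(\B{a}+a)$ and handle the two factors separately using the structural results already established for the pairing.

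For the difference factor, Theorem 2.4(ii) asserts that $a \mapsto \B{a}-a$ is a bijection from $V$ onto itself, so
$$\prod_{a\in V}(\B{a}-a) = \prod_{a\in V} a.$$
For the sum factor, I split $V$ according to whether $\B{a}+a$ already lies in $\{1,\ldots,(p-1)/2\}$ or not. By the definition of $V_2$ in Corollary 2.5, $\B{a}+a = \T{\B{a}+a}$ when $a \in V\setminus V_2$ while $\B{a}+a = p - \T{\B{a}+a}$ when $a \in V_2$. Hence
$$\prod_{a\in V}(\B{a}+a) \equiv (-1)^{|V_2|}\prod_{a\in V}\T{\B{a}+a} \pmod{p}.$$
Invoking the second half of Theorem 2.4(ii), the map $a \mapsto \T{\B{a}+a}$ is a bijection from $V$ onto $\{1,\ldots,(p-1)/2\}\setminus V$, so the last product equals $\prod_{x\in\{1,\ldots,(p-1)/2\}\setminus V} x$.

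Multiplying the two pieces, the product over $a\in V$ combines with the product over the complement of $V$ in $\{1,\ldots,(p-1)/2\}$ to give the full factorial:
$$\prod_{a\in V}(\B{a}^2 - a^2) \equiv (-1)^{|V_2|}\prod_{x=1}^{(p-1)/2} x \pmod{p}.$$
Finally, Corollary 2.5(ii) gives $|V_2| \equiv M_p \pmod{2}$, and the remark preceding that corollary identifies $(-1)^{M_p}$ with $\left(\frac{2}{p}\right)$, yielding the claimed congruence. No step requires serious work beyond bookkeeping once Theorem 2.4 and Corollary 2.5 are in hand; the only subtle point is keeping the sign $(-1)^{|V_2|}$ correctly attached to the sum factor, which is exactly what Corollary 2.5(ii) was designed to resolve.
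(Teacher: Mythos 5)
Your proof is correct and follows essentially the same route as the paper: factor $\B{a}^2-a^2=(\B{a}-a)(\B{a}+a)$, use the bijections $a\mapsto\B{a}-a$ and $a\mapsto\T{\B{a}+a}$ from Theorem 2.4(ii) to convert the two partial products into $\prod_{a\in V}a$ and $\prod_{a\in V}\B{a}$ respectively, and absorb the sign via $|V_2|\eq M_p\pmod 2$. The only cosmetic difference is that you describe the image of $a\mapsto\T{\B{a}+a}$ as the complement of $V$ rather than as $\{\B{a}\mid a\in V\}$, which is the same set.
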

\begin{proof}
By Theorem 2.1 and Corollary 2.2(ii) we have
$$ \prod_{a \in V}(\B{a} - a) = \prod_{a \in V} a , $$
$$\prod_{a \in V } (\B{a}+ a ) \eq (-1)^{|V_2|} \prod_{a \in V } \T{\B{a}+a} \eq (-1)^{M_p} \prod_{a \in V } \B{a}. \pmod{p} \\$$
 By definition of V we have $\prod_{a \in V } a\B{a} = \prod_{x=1}^{(p-1)/2}x$. Now put these together and the proof is complete.\\
\end{proof}

\begin{corollary}
Let $p\eq1\pmod{4}$ be a prime. Then
\begin{equation}
 \prod_{1\leq<i<j\leq(p-1)/2 \atop i^2+j^2\neq 0 \pmod{p}} (j^2-i^2) \eq  -(\frac{2}{p})  \pmod{p}.
 \end{equation}
\end{corollary}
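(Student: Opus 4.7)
The plan is to read off Corollary 2.7 as an immediate consequence of the three ingredients already assembled: equation (2.3), Lemma 2.3, and Corollary 2.6. The identity (2.3) expresses the target product as a quotient
\[
\prod_{\substack{1\leq i<j\leq(p-1)/2\\ i^2+j^2\not\equiv 0 \pmod p}}(j^2-i^2) = \frac{\prod_{1\leq i<j\leq(p-1)/2}(j^2-i^2)}{\prod_{a\in V}(\bar a^2-a^2)},
\]
where the denominator on the right comes from recognizing, as noted in (2.4), that the pairs $(i,j)$ with $i^2+j^2\equiv 0\pmod p$ are exactly the paired partners $\{a,\bar a\}$ with $a\in V$.

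Next, substitute Lemma 2.3 for the numerator to get $-\prod_{x=1}^{(p-1)/2} x \pmod p$, and Corollary 2.6 for the denominator to get $\bigl(\tfrac{2}{p}\bigr)\prod_{x=1}^{(p-1)/2} x \pmod p$. Since $t=\prod_{x=1}^{(p-1)/2}x$ satisfies $t^2\equiv -1\pmod p$, the factor $t$ is invertible mod $p$ and cancels between numerator and denominator. Using that $\bigl(\tfrac{2}{p}\bigr)=\pm 1$ is self-inverse, the quotient simplifies to $-\bigl(\tfrac{2}{p}\bigr)$, which is exactly the claim.

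There is essentially no obstacle — the only thing to verify carefully is that the indexing in (2.3) really splits the product cleanly and that the cancellation of $t$ is legitimate mod $p$. Both are immediate: by Theorem 2.1 (equivalent to the basic pairing property) every $(i,j)$ with $i^2+j^2\equiv 0\pmod p$ in the range $1\le i<j\le (p-1)/2$ is of the form $(a,\bar a)$ with $a\in V$, so (2.3) is an equality of nonzero integers mod $p$; and $t$ is invertible since $t\cdot(-t)\equiv 1\pmod p$. Hence the proof is a one-line calculation once the ingredients are cited.
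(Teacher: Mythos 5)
Your proof is correct and is essentially identical to the paper's own (one-line) argument: the paper likewise derives Corollary 2.7 by combining the quotient decomposition, the identification of the $i^2+j^2\equiv 0$ factors with $\prod_{a\in V}(\B{a}^2-a^2)$, Lemma 2.3 for the numerator, and Corollary 2.6 for the denominator, then cancelling $\prod_{x=1}^{(p-1)/2}x$ and using $(\frac{2}{p})^2=1$. (Only a trivial discrepancy: what you cite as (2.3) and (2.4) are labelled (2.2) and (2.3) in the paper.)
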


\begin{proof}
It's clear from (2.2),(2,3), Lemma 2.3 , Corollary 2.6 .        \\
\end{proof}

Now we evaluate the right product of (2.1) $\prod_{r,s=1 \atop r\neq s }^{(p-1)/4} (g^{4r}-g^{4s}) \pmod{p}$ . It's well known that Jacobi Sums can be applied to count the number of solutions of diagonal equations on $ \Z / p \Z$. Fortunately our equation is very simple and there are formulas available in the monograph \cite{BEW}. We introduce some standard notations.

Let $p\eq 1 \pmod{4}$ be a prime. Let g be a primitive root of p and choose $\chi$ to be the quartic character modulo p such that $\chi(g)=i$. Let $\alpha_4$ and $\beta_4$ be the integers uniquely determined by
\begin{equation}
 \alpha_4^2+\beta_4^2=p, \ \ \alpha_4 \eq -\big(\frac{2}{p}\big)\pmod{4} , \ \beta_4\eq \alpha_4g^{(p-1)/4} \pmod{p}.
\end{equation}

For future use we list the following table. It shows that, for example, if $p \eq 1\pmod{32}$ , we have either
$ \alpha_4 \eq 7 \pmod{16}, \beta_4 \eq 4 \pmod{8}$ or $ \alpha_4 \eq 15 \pmod{16}, \beta_4 \eq 0 \pmod{8}$. \\

\begin{tabular}{|c|c|c|c|c|c|c|c|}\hline
 $p\pmod{32}$ & $\alpha_4 (16)$ & $\beta_4 (8)$ & $\alpha_4 (16)$ & $\beta_4 (8)$& $p\pmod{32}$ & $\alpha_4 (16)$ & $\beta_4 (4)$ \\ \hline
1 &  7  & 4  & 15 & 0 & 5 & 1  & 2  \\   \hline
9 &  3  & 0 & 11 & 4 & 13 & 13 & 2\\   \hline
17 &  7  & 0 &  15 & 4 & 21 & 9 & 2 \\  \hline
25   & 3  &4  & 11 &  0 & 29 & 5 & 2  \\   \hline
\end{tabular}

\setcounter{lemma}{7}
\begin{lemma}
For $m \in \Z $ prime to $p$, let $N(m)$ denotes the number of solutions of the equation of $x^4-y^4 \eq m \pmod{p}$ where $0 \leq x,y \leq (p-1)$.

When $p\eq 1 \pmod{8},$ \ $N(m) =  \begin{cases}
  p-3+6\alpha_4  &\t{when}\ \chi(m)=1 ,
\\  p-3-2\alpha_4 &\t{when}\ \chi(m)=-1  ,
\\  p-3-2\alpha_4+4\beta_4 &\t{when}\ \chi(m)=i   ,
\\  p-3-2\alpha_4-4\beta_4  &\t {when}\ \chi(m)=-i.
\end{cases} \\ \\ $

When $p\eq 5 \pmod{8},$ \ $N(m) =
\begin{cases} p-3+2\alpha_4  &\t{when}\ \big( \frac{m}{p} \big)=1 ,
\\  p-3-2\alpha_4 &\t{when}\ \big( \frac{m}{p} \big)=-1 .
\end{cases} \\   $
\end{lemma}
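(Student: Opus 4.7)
The plan is to reduce the count $N(m)$ to Jacobi sums of the quartic character $\chi$ and then invoke the classical evaluations tabulated in \cite{BEW}. First I write, for $c\in\F_p^*$, $\#\{x\in\F_p:x^4=c\}=\sum_{j=0}^{3}\chi^j(c)$, while the count is $1$ if $c=0$. Splitting off the boundary contributions where $x=0$ (which forces $y^4=-m$) or $y=0$ (which forces $x^4=m$), the remaining main term in $N(m)$ becomes
$$\sum_{j,k=0}^{3}\chi^{j+k}(m)\sum_{t\neq 0,-1}\chi^j(1+t)\chi^k(t),$$
after the substitution $a=m(1+t)$, $b=mt$ in $x^4-y^4=a-b=m$. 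The inner sum over $t$ is, up to the factor $\chi^k(-1)$, the Jacobi sum $J(\chi^k,\chi^j)$.

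Next I would sort the $16$ pairs $(j,k)$ by the character $\chi^{j+k}$. The entries where one of $\chi^j,\chi^k,\chi^{j+k}$ is trivial contribute explicit constants ($-1$, or $p-2$ in the doubly-degenerate case). The non-degenerate Jacobi sums are all expressible in terms of $J(\chi,\chi)=\alpha_4+i\beta_4$ via $|J(\chi^i,\chi^j)|^2=p$ and the standard identities $J(\chi_1,\chi_2)J(\chi_1\chi_2,\chi_3)=J(\chi_2,\chi_3)J(\chi_1,\chi_2\chi_3)$ from \cite{BEW}. Summing these contributions weighted by $\chi^{j+k}(m)\in\{1,i,-1,-i\}$ produces the four listed values when $p\eq 1\pmod 8$.

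For the case $p\eq 5\pmod 8$, one has $\chi(-1)=i^{(p-1)/2}=-1$, so $\chi^j(-m)=(-1)^j\chi^j(m)$. In the boundary piece the odd-$j$ contributions cancel at once, and once the factors $\chi^k(-1)$ are tracked in the main sum the $\chi,\chi^3=\ov\chi$ parts cancel in pairs as well. Only the contributions from $\chi^0$ and $\chi^2=\big(\tfrac{\cdot}{p}\big)$ survive, so $N(m)$ depends only on $\big(\tfrac{m}{p}\big)$ and the four-case formula collapses to the stated two-case version, with $\beta_4$ disappearing from the answer.

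The main obstacle will be the bookkeeping of sign conventions: I must match the paper's normalization (2.5) (with $\al_4\eq -\big(\tfrac{2}{p}\big)\pmod 4$ and $\be_4\eq\al_4 g^{(p-1)/4}\pmod p$) to the sign produced by $J(\chi,\chi)=\al_4+i\be_4$ in \cite{BEW}, since any mismatch flips $\be_4$ to $-\be_4$ and so swaps the $\chi(m)=i$ and $\chi(m)=-i$ rows of the formula. Once that normalization is fixed — which is exactly what the congruence condition on $\be_4$ modulo $p$ is designed to pin down — the four (resp.\ two) displayed values are read off by direct substitution, and the contribution of the boundary terms absorbs cleanly into the $p-3$ constant.
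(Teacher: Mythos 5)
Your plan is correct, but note that it does considerably more work than the paper does: the paper's entire proof of this lemma is a one-line citation of Corollary 10.7.2 of Berndt--Evans--Williams, which already gives the number of solutions of $c_1x^4+c_2y^4\equiv m\pmod{p}$ with general coefficients, so nothing is derived there at all. What you propose is, in effect, the standard proof of that cited corollary: expand $\#\{x: x^4=c\}=\sum_{j=0}^{3}\chi^j(c)$ for $c\neq 0$, peel off the $x=0$ and $y=0$ solutions, and reduce the main term to the sums $\sum_{t\neq 0,-1}\chi^j(1+t)\chi^k(t)=\chi^k(-1)J(\chi^k,\chi^j)$, which are either degenerate constants or expressible through $J(\chi,\chi)=\alpha_4+i\beta_4$. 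The outline is sound --- for instance the degenerate pairs $(j,k)$ together with the boundary solutions do combine to the constant $p-3$ (keep in mind the singly degenerate sums are $-\chi^k(-1)$ rather than $-1$, which matters precisely when $p\equiv 5\pmod{8}$), and your observation that $\chi(-1)=-1$ for $p\equiv 5\pmod{8}$ is exactly why the answer there depends only on $\left(\frac{m}{p}\right)$ with $\beta_4$ absent. You also correctly isolate the one real hazard, the sign of $\beta_4$: the normalization $\beta_4\equiv\alpha_4 g^{(p-1)/4}\pmod{p}$ ties that sign to the choice $\chi(g)=i$ and decides which of the rows $\chi(m)=i$ and $\chi(m)=-i$ carries $+4\beta_4$. (For the paper's eventual use in Theorem 2.10 this ambiguity is harmless, since only $\beta_4$ modulo $8$ enters and a sign flip preserves that residue.) Your route buys self-containedness at the cost of a sixteen-term bookkeeping exercise that still has to be written out; the paper's route buys brevity at the cost of leaning entirely on the reference.
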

\begin{proof}
This follows from the formulas for $ c_1 x^4+ c_2 x^4\eq m \pmod{p}$ with general coefficients $c_1,c_2 \in \Z$ .(Corollary 10.7.2\cite{BEW}, p. 317)
\end{proof}

Since the above formulas count also solutions such as $(x,0),(0,y)$. We need do little modification.

\setcounter{corollary}{8}
\begin{corollary}
For $m \in \Z $ prime to $p$, let $N'(m)$ denotes the number of solutions of $ g^{4r}-g^{4s} \eq m\pmod{p}$ , where $ 1 \leq r,s\leq (p-1)/4$. \\

When $p\eq 1 \pmod{8},$ \ $N'(m) =  \begin{cases}
  [(p-3+6\alpha_4)-8]/16   &\t{when}\ \chi(m)=1 ,
\\  (p-3-2\alpha_4)/16 &\t{when}\ \chi(m)=-1  ,
\\  (p-3-2\alpha_4+4\beta_4)/16 &\t{when}\ \chi(m)=i   ,
\\  (p-3-2\alpha_4-4\beta_4)/16  &\t{when}\ \chi(m)=-i.
\end{cases} \\ \\ $

When $p\eq 5 \pmod{8},$ \ $N'(m) =
\begin{cases} [(p-3+2\alpha_4)-4]/16   &\t{when}\ \big( \frac{m}{p} \big)=1 ,
\\  (p-3-2\alpha_4)/16  &\t{when}\ \big( \frac{m}{p} \big)=-1 .
\end{cases} \\$
\end{corollary}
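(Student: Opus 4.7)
The plan is to express $N'(m)$ in terms of $N(m)$ by an explicit many-to-one correspondence, then subtract the contribution of ``boundary'' solutions of $x^4-y^4\eq m \pmod p$ with $x=0$ or $y=0$ which Lemma 2.8 counts but $N'(m)$ does not. First I would note that $\{g^{4r}:1\le r\le (p-1)/4\}$ equals the full set of nonzero quartic residues mod $p$, and for any fixed nonzero quartic residue $u$ the equation $z^4\eq u \pmod p$ has exactly $4$ solutions in $\{1,\ldots,p-1\}$ (since $p\eq 1 \pmod 4$ guarantees a primitive $4$th root of unity mod $p$). Hence every pair $(r,s)$ counted by $N'(m)$ lifts to exactly $4\times 4=16$ pairs $(x,y)\in\{1,\ldots,p-1\}^2$ with $x^4-y^4\eq m \pmod p$, and conversely each such pair projects to a unique $(r,s)$. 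Writing $B(m)$ for the number of solutions of $x^4-y^4\eq m \pmod p$ with $x=0$ or $y=0$ (disjoint from the $16$-to-$1$ fibres since $p\nmid m$), the key identity is $N(m)=16\,N'(m)+B(m)$, which reduces the corollary to evaluating $B(m)$.

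Next I would split $B(m)=B_y(m)+B_x(m)$, where $B_y(m)$ counts $x\in\{1,\ldots,p-1\}$ with $x^4\eq m \pmod p$ (the $y=0$ case) and $B_x(m)$ counts $y\in\{1,\ldots,p-1\}$ with $y^4\eq -m \pmod p$ (the $x=0$ case). Each summand is $4$ or $0$ according as $m$ (respectively $-m$) is or is not a nonzero quartic residue, and these values are read off from $\chi(m)$ together with the sign $\chi(-1)=i^{(p-1)/2}$, which equals $+1$ when $p\eq 1 \pmod 8$ and $-1$ when $p\eq 5 \pmod 8$. Thus when $p\eq 1 \pmod 8$ both $B_x(m)$ and $B_y(m)$ equal $4$ precisely when $\chi(m)=1$, giving $B(m)=8$ in that case and $B(m)=0$ for $\chi(m)\in\{-1,\pm i\}$. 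When $p\eq 5 \pmod 8$, whenever $(\frac{m}{p})=1$ exactly one of $B_x(m),B_y(m)$ equals $4$ (depending on whether $\chi(m)=-1$ or $\chi(m)=1$), while both vanish whenever $(\frac{m}{p})=-1$; hence $B(m)=4$ or $B(m)=0$ in the two respective sub-cases.

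Substituting these values into $N'(m)=(N(m)-B(m))/16$ with $N(m)$ taken from Lemma 2.8 recovers the four stated formulas for $p\eq 1 \pmod 8$ and the two stated formulas for $p\eq 5 \pmod 8$. The only delicate point is the case analysis of Step 2, namely keeping $B_x$ and $B_y$ straight under the two possible signs of $\chi(-1)$; the rest is routine once the $16$-to-$1$ correspondence is set up.
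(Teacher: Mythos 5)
Your proposal is correct and follows essentially the same route as the paper: identify the $16$-to-$1$ correspondence between pairs $(r,s)$ and pairs $(x,y)$ with $xy\not\eq 0\pmod p$, then subtract the boundary solutions with $x=0$ or $y=0$, whose count is governed by whether $m$ and $-m$ are quartic residues. Your treatment is in fact more careful than the paper's, which leaves the $\chi(-1)$ computation implicit (and contains a typo, writing $p\eq 1\pmod 8$ where $p\eq 5\pmod 8$ is meant in the second boundary case).
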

\begin{proof}
First we must exclude solutions of $x^4-y^4 \eq m \pmod{p}$ with $xy\eq 0 \pmod{p}$. When $p\eq 1 \pmod{8}$ and $\chi(m)=1$ ,$x^4\eq m \pmod{p}$ has four
solutions and so does $-y^4 \eq m \pmod{p}$ . When $p\eq 1 \pmod{8}$ and $\big( \frac{m}{p} \big)=1$, only the former one has four solutions.

Next it's clear each pair $ 1 \leq r,s\leq (p-1)/4$ corresponds to $16$ pairs of $(x,y)$ with $x^4\eq g^r, y^4 \eq g^s \pmod{p}.$
The conclusion easily follows.
\end{proof}

\setcounter{theorem}{9}
\begin{theorem} Let $p\eq 1\pmod{4}$ be a prime and $g$ be any primitive root of $p$. Write $M_p = (p^2-1)/8$. Then
\begin{equation}
\prod_{r,s=1 \atop r\neq s }^{(p-1)/4} (g^{4r}-g^{4s}) \eq  -(\frac{2}{p}) \pmod{p}.
\end{equation}
\end{theorem}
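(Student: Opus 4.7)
\noindent\emph{Proof proposal.} The plan is to recognize the left-hand side as a discriminant-type product over the full group of $n$-th roots of unity modulo $p$, where $n = (p-1)/4$, and thereby evaluate it by differentiating $x^n - 1$. Since $g$ is a primitive root of $p$, the element $g^4$ has multiplicative order $(p-1)/\gcd(4,p-1) = n$, so $\{g^{4r}: 1 \le r \le n\}$ is exactly the set of solutions of $x^n \equiv 1 \pmod p$. Write $\zeta_r = g^{4r}$ and $f(x) = \prod_{i=1}^{n}(x-\zeta_i)$; then $f(x) \equiv x^n - 1 \pmod p$, and
\[
\prod_{\substack{r,s=1 \\ r\neq s}}^{n}(\zeta_r - \zeta_s) \;=\; \prod_{r=1}^{n}\prod_{s\neq r}(\zeta_r - \zeta_s) \;=\; \prod_{r=1}^{n} f'(\zeta_r),
\]
which is the key reformulation.

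Next I would evaluate $\prod_{r} f'(\zeta_r)$ in closed form. From $f'(x) = nx^{n-1}$ and $\zeta_r^n = 1$ we get $f'(\zeta_r) = n\zeta_r^{-1}$, so $\prod_{r} f'(\zeta_r) = n^n \bigl(\prod_{r}\zeta_r\bigr)^{-1}$. By Vieta applied to $x^n - 1$, the product of the roots equals $(-1)^{n+1}$, which gives the clean identity
\[
\prod_{\substack{r,s=1 \\ r\neq s}}^{n}(\zeta_r - \zeta_s) \;=\; (-1)^{n+1}\, n^n.
\]

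Finally I would reduce modulo $p$ using $n = (p-1)/4 \equiv -1/4 \pmod p$ and Euler's criterion. Specifically,
\[
n^n \equiv (-1)^n \cdot (4^n)^{-1} \;=\; (-1)^n \bigl(2^{(p-1)/2}\bigr)^{-1} \;\equiv\; (-1)^n \left(\frac{2}{p}\right) \pmod p,
\]
where the last step uses Euler's criterion together with the fact that $\left(\frac{2}{p}\right) = \pm 1$ is self-inverse. Multiplying by the Vieta sign $(-1)^{n+1}$ yields $-\left(\frac{2}{p}\right) \pmod p$, which is the claim.

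The computation is essentially forced once the reformulation is made, so the only real subtlety is sign bookkeeping, most delicately the Vieta sign $\prod_r \zeta_r = (-1)^{n+1}$. This route bypasses the Jacobi-sum machinery of Lemma 2.8 and Corollary 2.9 entirely. An alternative closer to the paper's explicit setup would be to rewrite the product as $\prod_{m} m^{N'(m)}$ over nonzero $m \pmod p$, group the factors by the value of $\chi(m)$, and evaluate each of the four class products $\prod_{\chi(m)=i^j} m$ directly from the exponents of $g$; one then checks that the $\alpha_4$ and $\beta_4$ dependence from Corollary 2.9 cancels out and only a factor $-\left(\frac{2}{p}\right)$ survives. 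I expect the bookkeeping in that second approach to be the main obstacle, which is precisely why the discriminant route above is preferable.
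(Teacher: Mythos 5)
Your proof is correct, and it takes a genuinely different and substantially simpler route than the paper. The paper writes the product as $\prod_{n=1}^{p-1} g^{nN'(g^n)}$, where $N'(m)$ counts representations $g^{4r}-g^{4s}\equiv m \pmod p$, imports Jacobi-sum formulas for the number of solutions of $x^4-y^4\equiv m\pmod p$ (Lemma 2.8, via Berndt--Evans--Williams), corrects for degenerate solutions, and then evaluates $\sum_n nN'(g^n)$ by a case analysis on $p\bmod 8$ together with a table of residues of $\alpha_4$ and $\beta_4$. You instead observe that with $n=(p-1)/4$ the elements $g^{4r}$ are exactly the roots of $x^n-1$ modulo $p$, so the product is $\prod_r f'(\zeta_r)=(-1)^{n+1}n^n$ with $f(x)=x^n-1$, and then reduce via $n\equiv -4^{-1}\pmod p$ and Euler's criterion to get $(-1)^{2n+1}\left(\frac{2}{p}\right)=-\left(\frac{2}{p}\right)$. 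Every step checks out (the Vieta sign $\prod_r\zeta_r=(-1)^{n+1}$ can also be seen directly from $\prod_r g^{4r}=g^{(p-1)(n+1)/2}=(-1)^{n+1}$), and numerical spot checks at $p=13,17$ confirm the formula. What your approach buys is considerable: it is elementary, a few lines long, and eliminates the need for Lemma 2.8, Corollary 2.9, and the $\alpha_4,\beta_4$ table entirely --- which is notable given that the paper's stated motivation for all of Section 3 is to avoid the Jacobi-sum machinery that, for this theorem at least, was never needed. The one thing the paper's route provides that yours does not is the refined solution counts $N'(m)$ per quartic character class, but those are not used elsewhere in the paper.
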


\begin{proof}
We write
$$ \prod_{r,s=1 \atop r\neq s }^{(p-1)/4} (g^{4r}-g^{4s}) \eq  \prod_{n=1}^{p-1} g^{nN'(g^n)} \pmod{p}, $$
and show the product on the right is congruent to $(\frac{2}{p})$ modulo $p$.

First assume $ p = 5+ 8k.$ Then from Corollary 2.4
\begin{align*}
 \sum_{n=1}^{p-1} nN'(g^n) &=[ 1+3+\ldots+(3+8k)](p-3-2\alpha_4)/16  \\
& \  \  \  \ + [2+4+\ldots +(4+8k)](p-3+2\alpha_4-4) /16 \\
& =[(p-3)(8k+5)(4k+2)+ 2\alpha_4(4k+2) - 4(4k+3)(4k+2) ]/16 \\
& =(4k+2)[ (p-3)(p) +2\alpha_4 - 4(4k+3) ]/16   \\
& =[(p-1)/2] [(p-3)p + 2\alpha_4 - 2(p+1)]16 \\
& =[(p-1)/2] [(p-5)p+2\alpha_4-2]/16
\end{align*}

Then from the table we easily check that $[(p-5)p+2\alpha_4-2]/16$ is even so that
$$ \prod_{n=1}^{p-1} g^{nN'(g^n)} = g^{\sum_{n=1}^{p-1} nN'(g^n)} \eq (-1)^{[(p-5)p+2\alpha_4-2]/16} \eq 1 \pmod{p}.$$

Next assume $ p= 1+8k$. Then from Corollary 2.4 again
\begin{align*}
 \sum_{n=1}^{p-1} nN'(g^n) &= \{ (1+2+\ldots+8k)(p-3-2\alpha_4)+(4+8+\ldots+8k)(8\alpha_4 -8) \\
&  \  \  \   \ +[1-3+5-7+\ldots+(8k-3)-(8k-1)](4\beta_4)\}/16 \\
& = [4kp (p-3-2\alpha_4)+ 4k(2k+1)(8\alpha_4-8)-16k\beta_4 ]/16 \\
& = 4k [ p(p-3-2\alpha_4) + (p+3)(2\alpha_4-3)-4\beta_4]/16 \\
& = [(p-1)/2] [p^2-5p-6+6\alpha_4-4\beta_4]/16
 \end{align*}
Checking every case from the table after (2.5) we have $[p^2-5p-6+6\alpha_4-4\beta_4]/16$ is always odd so
that $ \prod_{n=1}^{p-1} g^{nN'(g^n)} \eq-1 \pmod{p}$ for $ p \eq 1\pmod{8}$.

In summary $ \prod_{n=1}^{p-1} g^{nN'(g^n)} \eq -(\frac{2}{p}) \pmod{p}.$ The proof is complete.

\end{proof}

Thus from Corollary 2.7 and Theorem 2.10 that the two products in the (2.1) are congruent modulo $p$ so that
$$s'(p)\eq |X| \eq L(p) \pmod{2}.$$

Recall that $L_p = (p-1)(p-5)/96$ for a prime $p\eq1 \pmod{4}$.It's clear $L_p$ is even if $p\eq 1,5\pmod{16}$ and odd if $p\eq 9,13 \pmod{p}.$
Therefore $ L_p \eq \lfloor \frac{p+1}{8} \rfloor  \pmod{2}$.

We will discuss the case for $p\eq 3 \pmod{4}$ in Section 4 . Thus finally we obtain the following theorem.

\begin{theorem}
Let p be an odd prime and $m\in Z $ with $p\nmid m.$ Let $s'(p)$ be the number of pairs $(i,j)$ such that $1\leq i < j \leq (p-1)/2$ and  $\T{ mi^2} >
\T{mj^2}.$ Then $s'(p) \eq \lfloor \frac{p+1}{8} \rfloor  \pmod{2}$, where $ \lfloor \ \rfloor $ is the floor function.
 \end{theorem}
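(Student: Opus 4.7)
The plan is to split into cases $p\eq 1\pmod 4$ and $p\eq 3\pmod 4$, leveraging all of Section~2 for the former and handling the latter separately. For $p\eq 1\pmod 4$ the proof is a chain of previously established results. By Theorem~2.1, $s'(p)\eq |X|\pmod{2}$ independently of the choice of $m$. The identity (2.1) of Theorem~2.2 then reads
\[
\prod_{\substack{1\ls i<j\ls (p-1)/2\\ i^2+j^2\not\eq 0\ (\mo p)}}(j^2-i^2)\eq (-1)^{|X|+L_p}\prod_{\substack{r,s=1\\ r\ne s}}^{(p-1)/4}(g^{4r}-g^{4s})\pmod{p}.
\]
The left-hand product equals $-(\frac{2}{p})$ modulo $p$ by Corollary~2.7, and the right-hand product equals $-(\frac{2}{p})$ modulo $p$ by Theorem~2.10. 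Comparing the two sides yields $(-1)^{|X|+L_p}\eq 1\pmod{p}$, hence $s'(p)\eq |X|\eq L_p\pmod{2}$.

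To finish the $p\eq 1\pmod 4$ case I would verify the elementary congruence $L_p\eq \lfloor(p+1)/8\rfloor\pmod{2}$ by writing $p=16k+r$ with $r\in\{1,5,9,13\}$ and computing both $L_p=(p-1)(p-5)/96$ and $\lfloor(p+1)/8\rfloor$ in each of the four residue classes; the two parities agree (both even when $r\in\{1,5\}$ and both odd when $r\in\{9,13\}$). For $p\eq 3\pmod 4$, my approach is to observe that $-1$ is a quadratic non-residue, so $mi^2\not\eq -mj^2\pmod{p}$ for all $i,j\in\{1,\ldots,(p-1)/2\}$; consequently $\sigma(i)=\T{mi^2}$ is a genuine permutation of $\{1,\ldots,(p-1)/2\}$, and $s'(p)$ is precisely its inversion count. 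I would then compute $\sgn(\sigma)=(-1)^{s'(p)}$ via the Vandermonde quotient
\[
\sgn(\sigma)\eq \prod_{1\ls i<j\ls (p-1)/2}\f{\T{mj^2}-\T{mi^2}}{j-i}\pmod{p},
\]
writing $\T{mk^2}\eq \epsilon_k mk^2\pmod{p}$ for signs $\epsilon_k\in\{\pm 1\}$, factoring $\epsilon_j j^2-\epsilon_i i^2$ suitably, and reducing the denominator via the $p\eq 3\pmod 4$ branch of Lemma~2.3.

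The main obstacle will be the $p\eq 3\pmod 4$ case: tracking the signs $\epsilon_k$ and the overall power of $(-1)$ in the Vandermonde quotient carefully enough to identify the resulting parity with $\lfloor(p+1)/8\rfloor\pmod{2}$, and, in parallel, showing that $\sgn(\sigma)$ is independent of $m$ (for instance by observing that passing from $m$ to $mk^2$ precomposes $\sigma$ with the permutation $i\mapsto \T{ki}$ and checking that the associated sign correction vanishes for $p\eq 3\pmod 4$). The clean pairing structure that powered the $p\eq 1\pmod 4$ analysis is not available here, so a genuinely different argument is required; fortunately, because $\sigma$ is already a permutation, one can bypass the pairing theory entirely, in line with the paper's remark that this case is elementary.
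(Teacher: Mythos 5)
Your treatment of the case $p\equiv 1\pmod 4$ is exactly the paper's argument: Theorem 2.1 reduces $s'(p)$ to $|X|\bmod 2$, equation (2.1) together with Corollary 2.7 and Theorem 2.10 forces $(-1)^{|X|+L_p}\equiv 1\pmod p$, hence $s'(p)\equiv L_p\pmod 2$, and the residue-class check that $L_p\equiv\lfloor(p+1)/8\rfloor\pmod 2$ (even for $p\equiv 1,5\pmod{16}$, odd for $p\equiv 9,13\pmod{16}$) is correct. No issues there.

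The case $p\equiv 3\pmod 4$, however, is left as a sketch with the decisive step undone, and the route you choose runs into a real difficulty. Writing $\overline{mk^2}\equiv\epsilon_k mk^2\pmod p$ and expanding the Vandermonde numerator gives $\prod_{i<j}(\epsilon_j j^2-\epsilon_i i^2)$, which splits as a global sign times $\prod_{\epsilon_i=\epsilon_j}(j^2-i^2)\cdot\prod_{\epsilon_i\neq\epsilon_j}(j^2+i^2)$; the second factor is a product of $i^2+j^2$ over a set of pairs determined by the Gauss-lemma signs of $mi^2$, and nothing in the paper (Lemma 2.3 evaluates only the full product $\prod_{i<j}(j^2-i^2)$) lets you compute it. This is precisely the "obstacle" you name, and it is the whole content of the case, so as it stands the proof is incomplete. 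The paper's Section 4 avoids all of this: for $p\equiv 3\pmod 4$ the set $\{1,\dots,(p-1)/2\}$ with $x\ast y=\overline{xy}$ is a cyclic group of odd order $(p-1)/2$, and since $\overline{mi^2}=\overline{m\,\overline{i^2}}$ the map $\sigma_m$ factors as the squaring map $x\mapsto x\ast x$ (conjugate, via the discrete logarithm, to doubling on $\mathbb{Z}/\frac{p-1}{2}\mathbb{Z}$) followed by multiplication by $\overline{m}$, which is a square in this odd-order group and hence an even permutation. So $\operatorname{sgn}(\sigma_m)$ equals the sign of the doubling permutation of $\mathbb{Z}/\frac{p-1}{2}\mathbb{Z}$, whose parity is elementary and matches $\lfloor(p+1)/8\rfloor$. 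Your observation that $\sigma$ is a genuine permutation and your $m$-independence argument (precomposing with $i\mapsto\overline{ki}$, an even permutation) are both sound and are in fact halfway to this decomposition; you should replace the Vandermonde computation with the group-theoretic one to close the gap.
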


\section{ Balanced Pairing Theorem }

 Since the proof given in last section involves the application of Jacobi Sums of quartic character,  we try to find a more elementary proof.
 From the results we have obtained ,it's not hard to see that $ |X|\eq|Y|\eq|Z|\eq\pmod{2}$. However it's most amazing to discover that these three
 cases occur with equal frequencies, i.e. $ |X| = |Y| = |Y|$, which enables us to give a combinatoric proof.

 We know that there are $(p-1)/4$ ordered pairs so that $|X|+|Y|+|Z|= \binom{(p-1)/4}{2} = 3L_p.$ We need two more identities. The first one is found by inspecting their relative orders more closely.
\setcounter{equation}{0}
\setcounter{theorem}{0}
\begin{theorem}
\begin{equation}
|Y|= L_p , \ \  |X|+ |Z|= 2L_p.
\end{equation}
\end{theorem}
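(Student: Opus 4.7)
My plan is to reduce the first identity to a direct count over $V$ that collapses via Corollary 2.5, and then to deduce the second identity from the trivial total $|X|+|Y|+|Z|=\binom{(p-1)/4}{2}=3L_p$.

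The starting observation is that a pair $(a,b)\in V\times V$ lies in $Y$ precisely when $\B{a}<b$: the relations $a<\B{a}$ and $b<\B{b}$ are automatic from $a,b\in V$, and $\B{a}<b$ already forces $a<b$. Writing $W=\{1,\ldots,(p-1)/2\}\setminus V$ and noting that the partner map $a\mapsto\B{a}$ is a bijection $V\to W$ (since $a\in V$ gives $a<\B{a}$, so $\B{a}\notin V$), the $\B{a}$'s less than $b$ correspond bijectively to the elements of $W\cap[1,b-1]$. Hence I would first establish
\[|Y|=\sum_{b\in V}|W\cap[1,b-1]|.\]

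Next, since $b\in V$ implies $b\notin W$, I replace $[1,b-1]$ by $[1,b]$, and then use $|W\cap[1,b]|=b-|V\cap[1,b]|$ to split the sum into $\sum_{b\in V}b\;-\;\sum_{b\in V}|V\cap[1,b]|$. The first sum equals $M_p/3$ by Corollary 2.5(i). The second sum is a purely combinatorial quantity: swapping the order of summation counts ordered pairs $(a,b)\in V\times V$ with $a\le b$, which is $\binom{n+1}{2}=n(n+1)/2$ where $n=|V|=(p-1)/4$.

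Finally, the identity $M_p=(p^2-1)/8=n(2n+1)$ turns $|Y|$ into $n(2n+1)/3-n(n+1)/2$, and routine simplification gives $n(n-1)/6=L_p$. The claim $|X|+|Z|=2L_p$ then follows at once from $|X|+|Y|+|Z|=3L_p$. The only step that really needs care is the opening reformulation — verifying that the defining chain $a<\B{a}<b<\B{b}$ collapses to the single inequality $\B{a}<b$, and that the partner map is a clean bijection $V\to W$ so the count of admissible $a$'s becomes a count of elements of $W$ below $b$. Beyond that, the argument is bookkeeping driven entirely by Corollary 2.5.
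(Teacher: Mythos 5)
Your proof is correct, and it is a genuine streamlining of the paper's argument rather than a reproduction of it. The paper fixes $a\in V$ and performs \emph{two} local counts --- of the interval $\{1,\dots,a-1\}$, giving $\sum_{a\in V}(a-1)=2|Y|+|X|+|Z|$, and of the interval $\{a+1,\dots,\B{a}-1\}$, giving $\sum_{a\in V}(\B{a}-a-1)=2|X|+2|Z|$ --- and then subtracts, using only the relation $\sum_{a\in V}\B{a}=2\sum_{a\in V}a$ from Theorem 2.4 to conclude $2|Y|=|X|+|Z|$. Your opening reformulation ($(a,b)\in Y$ iff $\B{a}<b$, with the partner map a bijection $V\to W$) is in substance the paper's first count reorganized: your identity $|Y|=\sum_{b\in V}b-\binom{n+1}{2}$ with $n=(p-1)/4$ is equivalent to the paper's $\sum_{a\in V}(a-1)=|Y|+\bigl(|X|+|Y|+|Z|\bigr)$. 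Where you diverge is in closing the argument: instead of the second interval count, you substitute the explicit value $\sum_{a\in V}a=M_p/3$ from Corollary 2.5(i) and compute $|Y|=n(2n+1)/3-n(n+1)/2=n(n-1)/6=L_p$ directly. Since Corollary 2.5(i) is itself an immediate consequence of Theorem 2.4, the two proofs rest on the same input, but yours needs only one counting identity and delivers the value of $|Y|$ in one line of arithmetic; the paper's version buys a symmetric presentation in which $X$ and $Z$ visibly play interchangeable roles (which is why the paper must then introduce $\gamma(t,p)$ in Theorem 3.2 to separate them). All the steps you flagged as needing care do check out: $\B{a}\notin V$ because $\T{t\B{a}}=\T{-a}=a<\B{a}$, so the partner map really is a bijection onto $W$, and $|V|=(p-1)/4$ since $\T{ta}=a$ would force $t\equiv\pm1\pmod p$, contradicting $t^2\equiv-1\pmod p$.
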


\begin{proof}
For each $a\in V$ ,define $ X_{a} = \{ x\in V | a < x , (a,x)\in X \}$ and $ X_{a}' = \{ x\in V | x < a ,  (x,a)\in X \}$. These two sets are related
to those ordered pairs in X whose first or second component is $a$ respectively. Then we define $Y_{a}$,$Y_{a}'$ and $Z_{a}$,$Z_{a}'$ similarly.
For any pair $(a,b)\in X $ with $a<b$ we have $b\in X_{a}$ and $a \in X_{b}'$. and the same holds for X replaced by Y or Z. So we have
\begin{gather}
\sum_{a\in V} |X_{a}| = \sum_{a\in V} |X_{a}'| =|X|, \notag \\
\sum_{a\in V} |Y_{a}| = \sum_{a\in V} |Y_{a}'| =|Y|, \\
\sum_{a\in V} |Z_{a}| = \sum_{a\in V} |Z_{a}'| =|Z|. \notag
\end{gather}

Now fix any $a\in V$, consider the set $\{1,2,\ldots,a-1\}.$ For any element $x\in V$ with $x<a$, as the figure shows ,
there are three possibilities before us.
$$ \ldots \square\ldots\bigcirc \ldots \triangle \ldots \B{\bigcirc} \ldots a \ldots \B{\square}  \ldots \B{a} \ldots \B{\triangle} \ldots   $$

First case, its partner $\B{x}$ is also less than $a$. (Denote x by $\bigcirc$.) Then $x < \B{x} < a <\B{a}$ so that $(x,a) \in Y$ and $ x\in Y_{a}'.$ Second
($\square$), if $ a< \B{x} <\B{a}$, then $ x<a<\B{x} <\B{a}$ so that $ (x,a) \in X$ and $x \in X{a}'$. Third ($\triangle$), if $\B{a} < \B{x}$, then
 $ x<a<\B{a} <\B{x}$ so that $ (x,a)\in Z$ and $x \in Z_{a}'$. Besides if any number less than $a$ isn't in V, say $\B{y}$, then
 its partner y is in the first case. \\

In summary for each $a\in V$ we have
$$ a-1= 2|Y_{a}'|+ |X_{a}'|+ |Z_{a}'| $$
And we add up these identities together, then by (3.1)
\begin{equation}
\sum_{a\in V} (a-1) = 2|Y|+|X|+|Z|. \notag
\end{equation}

$$ \ldots \square\ldots a \ldots \bigcirc \ldots \triangle \ldots \B{\bigcirc}\ldots \B{\square}\ldots \B{a} \ldots \B{\triangle} \ldots   $$

Then for each $a\in V$, we analyse the set $\{a+1,a+2,\ldots,\B{a}-1\}$ in the same way. It divided into three subsets.
 First ($\square$) consists of those $\B{x}$ with $x \in X_{a}'$ so that $x <a<\B{x} <\B{a}$ . Second ($\bigcirc$) consists those pairs $<x,\B{x}>$ with $x \in Z_{a} $ so that $a< x<\B{x} <\B{a}$. Then the third ($\triangle$) consists of those $x\in X_{a}$ such that $a< x<\B{a} <\B{x}.$ For each $a\in V$ we obtain  \\
 $$ \B{a}-a-1 = |X_{a}'|+ 2|Z_{a}|+ |X_{a}|.  $$

 And we add up these identities together , then by (3.7) again
\begin{equation}
 \sum_{a\in V} (\B{a}-a-1) = 2|X|+2|Z|.  \notag
\end{equation}

The left sides of (3.2) and (3.3) are equal since by Theorem 2.4.
$$ \sum_{a\in V} (\B{a}-a-1) - \sum_{a\in V} (a-1)= \sum_{a\in V} \B{a} - 2\sum_{a\in V} a = 0 .$$

Therefore $2|X|+2|Z|=2|Y|+|X|+|Z|$ so that $2|Y| = |X|+ |Z|$. We know that$|X|+|Y|+|Z|= 3L_p.$ The conclusion follows.

\end{proof}

However in the above argument $X$ and $Z$ are in the same position. So we had to turn to another way.

\begin{theorem}
Let $p \eq 1 \pmod{4}$ be a prime. Write $t = \prod_{x=1}^{(p-1)/2}x$. Let $\gamma(t,p)$ be the number of pairs $(i,j)$ such that $1\leq i< j \leq (p-1)/2 $ and $\T{ti} > \T{tj}$. Then
\begin{equation}
\gamma(p) = 4|Z|+ 2|X|+ (p-1)/4.
\end{equation}
\end{theorem}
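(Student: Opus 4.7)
\bigskip
\noindent\textbf{Proof proposal.}

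The key observation is that the map $\tau(x) = \overline{tx}$ is an involution on $\{1,2,\ldots,(p-1)/2\}$ whose two-cycles are precisely the pairs $\langle a, \overline{a}\rangle$ of the pairing: $\tau(a) = \overline{a}$ and $\tau(\overline{a}) = a$. Therefore $\gamma(p)$ simply counts the inversions of $\tau$, i.e.\ the number of pairs $\{i,j\} \subseteq \{1,\ldots,(p-1)/2\}$ with $i<j$ but $\tau(i) > \tau(j)$. So the plan is to partition these inversions by whether $\{i,j\}$ lies in a single block of the pairing or straddles two blocks, then read off the cross-block count from the trichotomy $X, Y, Z$.

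First I would handle the intra-block inversions. For any $a \in V$, the pair $(i,j) = (a,\overline{a})$ satisfies $i < j$ and $\tau(i) = \overline{a} > a = \tau(j)$, so it contributes one inversion. There are $|V| = (p-1)/4$ such pairs, giving the summand $(p-1)/4$ in the claimed formula.

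Next I would tabulate the inter-block contribution. Fix two distinct pairs $\langle a, \overline{a}\rangle$ and $\langle b, \overline{b}\rangle$ with $a < b$; there are exactly four relevant ordered pairs $(i,j)$ with $i<j$, obtained by choosing one element from each block. For each of the three relative-order cases one computes $\tau(i)$ vs.\ $\tau(j)$ directly using $\tau(a)=\overline{a}$, $\tau(\overline{a})=a$, etc. A short case check shows:
\begin{itemize}
\item if $(a,b) \in Y$, then $\overline{a} < b$ and all four crossing pairs are order-preserving, contributing $0$ inversions;
\item if $(a,b) \in X$, the pairs $(a,\overline{b})$ and $(b,\overline{a})$ are inversions while $(a,b)$ and $(\overline{a},\overline{b})$ are not, contributing $2$ inversions;
\item if $(a,b) \in Z$, all four crossing pairs are inversions, since $a < b$ forces $\tau(a) = \overline{a} > \overline{b} = \tau(b)$ and similarly for the other three comparisons, contributing $4$ inversions.
\end{itemize}

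Summing the intra- and inter-block contributions yields $\gamma(p) = (p-1)/4 + 2|X| + 4|Z|$, as desired. There is no real obstacle; the only care needed is to check the case analysis for $X, Y, Z$ without double-counting and to remember the intra-block inversions $(a,\overline{a})$ which the sets $X, Y, Z$ do not register. The cleanest presentation is probably the table already used in the proof of Theorem 2.1 of the previous section, here rewritten so that each cell records whether the given pair is a $\tau$-inversion.
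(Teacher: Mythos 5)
Your proposal is correct and follows essentially the same route as the paper: split the inversions of the involution $\tau(x)=\T{tx}$ into the $(p-1)/4$ intra-pair inversions $(a,\B{a})$ and the cross-pair ones, then verify by a direct case check that each $(a,b)\in X$, $Y$, $Z$ contributes $2$, $0$, $4$ inversions respectively. The paper presents exactly this case analysis as a table, which is the presentation you yourself suggest at the end.
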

\begin{proof}
Recall that if $a \in V$, then $\T{\B{a}}= a, \T{a}= \B{a}.$ Thus if $ i=a, j=\B{a}$ for some $a \in V$, then $\T{ti}=j > i=\T{tj}$.
There are $(p-1)/4$ such pairs. Otherwise $i,j$ belong to different ordered pairs in our paring. We list all the possibilities and mark with underline those pairs contribute to $\gamma(t,p)$. So that each $(a,b)\in X$ give two such pairs and each $(a,b)\in Z$ give four. And the lemma follows. \\

\renewcommand\arraystretch{1.4}
\begin{tabular}{|c|c|c|c|c|c|}\hline
\ X:\ $a< b < \B{a} < \B{b}$ & $(i,j)$ & $(a,b)$ & $\underline{(a,\B{b})}$ & $\underline{(b,\B{a})}$ & $ (\B{a},\B{b})$  \\ \hline
\  2    & $(\T{ti},\T{tj}) $ & $(\B{a},\B{b})$ & $(\B{a},b)$ & $(\B{b},a)$ & $ (a,b)$  \\ \hline
\ Y:\ $a< \B{a} < b < \B{b}$ & $(i,j)$ & $(a,b)$ & $(a,\B{b})$ & $(\B{a},b)$ & $ (\B{a},\B{b})$  \\ \hline
\  0     & $(\T{ti},\T{tj})$ & $(\B{a},\B{b})$ & $(\B{a},b)$ & $(a,\B{b})$ & $ (a,b)$  \\ \hline
\ Z:\ $a< b < \B{b} <\B{a} $ &  $(i,j)$ & $ \underline{(a,b)}$ & $\underline{(a,\B{b})}$ & $\underline{(b,\B{a})}$ & $ \underline{(\B{b},\B{a})}$\\ \hline
\  4     & $(\T{ti},\T{tj})$ & $(\B{a},\B{b})$ & $(\B{a},b)$ & $(\B{b},a)$ & $ (b,a)$  \\ \hline
\end{tabular} \\

\end{proof}

Then we investigate this new unknown $\gamma(t,p)$. Now note that if our theorem is valid, then $|X|=|Y|=|Z|=L_p= (p-1)(p-5)/96.$ Therefore
by (3.8), we must have $\gamma(t,p)= 6 L_p + (p-1)/4 = [(p-1)/4]^2.$ It leads us to another interesting discovery.

Recall that in number theory Gauss' lemma tell us that the Legendre symbol $(\frac{x}{p} )$ for odd prime p and $p \nmid x$ is equal to
$(-1)^{ |U_{x,p}| }$, where $U_{x,p} = \{ i | 1\leq i <j \leq (p-1)/2  ,\{ xi\}_p > p/2 \}.$

Define $\gamma(x,p)$ be the number of pairs $(i,j)$ such that $1\leq i< j \leq (p-1)/2 $ and $\T{xi} > \T{xj}$. Then $\gamma(x,p)$ and $\Gamma(x,p)$ have the following relation. It is derived by elementary arguments and also holds with p replaced by any odd nature number $n>3$.

\setcounter{lemma}{2}
\begin{lemma}
\begin{equation}
\gamma(x,p) =[(p-1)/4]^2 - [(p-1)/4 - \Gamma(x,p)]^2 .
\end{equation}
\end{lemma}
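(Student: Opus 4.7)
The plan is to prove the equivalent formulation $\gamma(x,p) = |A| \cdot |B|$, where $A = \{i \in \{1, \ldots, N\} : \{xi\}_p < p/2\}$ and $B = \{i : \{xi\}_p > p/2\}$ with $N = (p-1)/2$. Since $|B| = \Gamma(x,p)$ and $|A| + |B| = N$, the difference-of-squares factorization
\[
[(p-1)/4]^2 - [(p-1)/4 - \Gamma]^2 = \Gamma \cdot (2 \cdot (p-1)/4 - \Gamma) = \Gamma(N - \Gamma) = |A| \cdot |B|
\]
reduces the stated identity to this cleaner combinatorial form.

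First I would partition inversions by the $\{A,B\}$-types of the pair $(i, j)$. Using $\overline{xi} = \{xi\}_p$ when $i \in A$ and $\overline{xi} = p - \{xi\}_p$ when $i \in B$, the inversion condition $\overline{xi} > \overline{xj}$ for $i < j$ specializes to $\{xi\}_p > \{xj\}_p$ (both in $A$), to $\{xi\}_p < \{xj\}_p$ (both in $B$), to $\{xi\}_p + \{xj\}_p > p$ (for $i \in A, j \in B$), and to $\{xi\}_p + \{xj\}_p < p$ (for $i \in B, j \in A$). Since there are precisely $|A| \cdot |B|$ mixed pairs in total, the identity $\gamma = |A|\cdot|B|$ is equivalent to the assertion that the number of same-type inversions equals the number of mixed-type non-inversions.

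Next I would construct a bijection between these two sets, exploiting the multiplicative action of $x$ modulo $p$. For a same-type inversion, say $(i, j)$ with $i, j \in A$ and $\{xi\}_p > \{xj\}_p$, the positive quantity $\{xi\}_p - \{xj\}_p$ lies in $\{1, \ldots, (p-1)/2\}$ and has a unique preimage $d$ under multiplication by $x$ in $\{1, \ldots, N\}$; this $d$, paired with $i$ or $j$ in a case-dependent way determined by which of $\{x d\}_p, \{x(i-d)\}_p$ lie above or below $p/2$, yields the associated mixed non-inversion. The inverse map is constructed analogously starting from a mixed non-inversion, using the complementary sum $\{xi\}_p + \{xj\}_p$ and its preimage under $x$.

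The hard part will be verifying that this correspondence handles all four subcases (AA inversions, BB inversions, AB non-inversions, BA non-inversions) uniformly, especially that the image always lands in the correct complementary type. A fallback strategy if the direct bijection proves intricate is to evaluate $\sum_{1 \le i < j \le N} \mathrm{sign}(\overline{xi} - \overline{xj}) = 2\gamma - \binom{N}{2}$ by expressing each summand as the Gauss-lemma-type sign of $x(\epsilon_i i - \epsilon_j j) \bmod p$, where $\epsilon_i = \pm 1$ encodes membership in $A$ or $B$, and then collapsing the resulting four case-sums via the symmetry $\{-y\}_p = p - \{y\}_p$ to the target value $2|A|\cdot|B| - \binom{N}{2}$.
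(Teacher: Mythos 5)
The paper offers no proof of this lemma at all (it is merely asserted to follow ``by elementary arguments''), so your write-up has to stand entirely on its own. Your preparatory reductions are correct and well chosen: with $N=(p-1)/2$, the factorization $[(p-1)/4]^2-[(p-1)/4-\Gamma]^2=\Gamma(N-\Gamma)=|A|\cdot|B|$, the four type-dependent reformulations of the inversion condition (all four are right), and the equivalence of $\gamma=|A|\cdot|B|$ with ``same-type inversions $=$ mixed-type non-inversions'' are all sound, and the identity you are reducing to is indeed true (I checked it numerically for several $(p,x)$).

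The gap is that the bijection, which is the entire content of the proof, is never actually constructed, and the natural reading of your recipe fails. Take an $AA$-inversion $i<j$ with $i,j\in A$ and $\{xi\}_p>\{xj\}_p$. The element of $\{1,\ldots,N\}$ your difference produces is forced to be $d=j-i$; note $\{xd\}_p=p-(\{xi\}_p-\{xj\}_p)>p/2$, so $d\in B$ (it is \emph{not} the preimage of the difference itself, which is $p-(j-i)>p/2$ and lies outside $\{1,\ldots,N\}$ --- there is already a sign slip here). The candidate mixed pair $\{i,d\}$ has $i\in A$, $d\in B$ and $i+d=j\in A$, so by your own case conditions it is a non-inversion exactly when $d<i$ and an \emph{inversion} when $d>i$. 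Concretely, for $p=13$, $x=5$ one has $A=\{1,3,6\}$, $B=\{2,4,5\}$; the $AA$-inversion $(1,3)$ maps to the mixed pair $(1,2)$, which is an inversion. So ``paired with $i$ or $j$ in a case-dependent way'' is hiding the whole difficulty, and it is not clear that any such local rule closes up into a bijection. Your fallback is the more robust route: setting $e_k=+1$ for $k\in A$ and $e_k=-1$ for $k\in B$, extended by $e_{-k}=-e_k$ and $e_{k+p}=e_k$, your four cases give $\mathrm{sgn}(\overline{xi}-\overline{xj})=\pm e_{j-i}$ or $\pm e_{i+j}$, and the lemma becomes the quadratic identity $\sum_{1\le i<j\le N}\mathrm{sgn}(\overline{xi}-\overline{xj})=\tfrac12\bigl[N-(\sum_{k=1}^{N}e_k)^2\bigr]$, using $\sum_k e_k=N-2\Gamma$. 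That computation still has to be carried out and is where the real work lies; as submitted, the proposal is a correct plan with its central step missing.
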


Now for each pair $<a,\B{a}>$ in our paring, we have $ \B{a} = \T{ta} \eq \pm \{ta\}_p \pmod{p}.$ Thus $ \{ t\B{a}\}_p \eq \pm \{t^2a\}_p \eq \mp \{ a\}_p \pmod{p}$ which implies $\{ta\}_p > p/2 $ if and only if $\{t\B{a}\}_p < p/2$. Hence only one of $a$ and $\B{a}$ contribute to $\Gamma(t,p)$. We have $(p-1)/4$ ordered pairs. Therefore we have

\setcounter{corollary}{3}
\begin{corollary}
\begin{equation}
\Gamma(t,p)=(p-1)/4, \  \  \gamma(t,p) = [(p-1)/4]^2.
\end{equation}
\end{corollary}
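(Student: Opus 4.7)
The plan is to combine the pair-by-pair analysis of $U_{t,p}$ sketched in the paragraph preceding the corollary with Lemma 3.3, which already expresses $\gamma(x,p)$ in terms of $\Gamma(x,p)$. So the whole statement reduces to evaluating $\Gamma(t,p)$.

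First I would make rigorous the claim that for each ordered pair $<a,\B{a}>$ in our pairing, exactly one of $a$ and $\B{a}$ belongs to the set $U_{t,p} = \{i\colon 1\le i\le (p-1)/2,\ \{ti\}_p>p/2\}$. This requires only a short case split on the sign in $\B{a}\eq \pm ta\pmod{p}$. If $\B{a} = \{ta\}_p < p/2$, then multiplying by $t$ and using $t^2\eq -1\pmod{p}$ gives $t\B{a}\eq -a\pmod{p}$, hence $\{t\B{a}\}_p = p-a > p/2$, so $\B{a}\in U_{t,p}$ but $a\notin U_{t,p}$. If instead $\B{a} = p-\{ta\}_p$, the parallel computation yields $\{t\B{a}\}_p = a < p/2$, so $a\in U_{t,p}$ but $\B{a}\notin U_{t,p}$. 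In either case the pair contributes exactly one element to $U_{t,p}$.

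Second, since the $(p-1)/4$ ordered pairs partition $\{1,2,\ldots,(p-1)/2\}$, summing the above contribution gives $\Gamma(t,p) = (p-1)/4$. Substituting this value of $\Gamma(t,p)$ into Lemma 3.3 collapses the second term to $0$ and immediately yields $\gamma(t,p) = [(p-1)/4]^2$.

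The only real obstacle is the sign bookkeeping in the first step, and that is essentially what the preceding paragraph of the paper already does; once it is in place, Lemma 3.3 carries out all the remaining combinatorial work, so the corollary follows with no further input.
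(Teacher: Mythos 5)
Your proposal is correct and follows essentially the same route as the paper: the paragraph preceding the corollary already observes that $t\B{a}\eq \mp a \pmod{p}$ forces exactly one of $a,\B{a}$ into $U_{t,p}$, giving $\Gamma(t,p)=(p-1)/4$ over the $(p-1)/4$ pairs, and Lemma 3.3 then yields $\gamma(t,p)=[(p-1)/4]^2$. Your write-up merely makes the sign case split more explicit than the paper does, which is a welcome clarification but not a different argument.
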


Finally we establish the elegant property from (3.6), (3.8), (3.10), which along with Theorem 2.1 also give a elementary proof of Theorem 2.11.

\setcounter{theorem}{4}
\begin{theorem}{( Balanced Pairing)} \\
Let $p \eq 1 \pmod{4}$ be a prime. Sets X,Y,Z are defined in Section 1. Then
\begin{equation}
|X| = |Y| = |Z| = (p-1)(p-5)/96.
\end{equation}
\end{theorem}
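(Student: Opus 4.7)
The plan is to combine the three ingredients already assembled in Section 3, namely Theorem 3.1, Theorem 3.2, and Corollary 3.4, into a small linear system in the unknowns $|X|$, $|Y|$, $|Z|$, and solve it. No new structural insight is required at this point; the heavy lifting has been done.

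First, I would invoke Theorem 3.1 to record the two relations
\begin{equation*}
|Y| = L_p, \qquad |X| + |Z| = 2L_p.
\end{equation*}
Combined with the trivial total count $|X|+|Y|+|Z| = \binom{(p-1)/4}{2} = 3L_p$, this already pins down $|Y|$ but leaves $|X|$ and $|Z|$ undetermined individually. Theorem 3.1 by itself cannot distinguish $X$ from $Z$, as remarked right after its proof, so a second, asymmetric identity is essential.

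Next I would bring in Theorem 3.2 together with Corollary 3.4. Substituting $\gamma(t,p) = [(p-1)/4]^2$ into the equation $\gamma(t,p) = 4|Z| + 2|X| + (p-1)/4$ gives
\begin{equation*}
4|Z| + 2|X| \;=\; \left(\tfrac{p-1}{4}\right)^{\!2} - \tfrac{p-1}{4} \;=\; \tfrac{(p-1)(p-5)}{16} \;=\; 6L_p,
\end{equation*}
so that after dividing by $2$ I obtain the second linear relation $|X| + 2|Z| = 3L_p$. This is the crucial asymmetric identity, as it treats $X$ and $Z$ with different weights coming from the $2$ versus $4$ entries in the table of Theorem 3.2.

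Finally I would solve the two-equation linear system
\begin{equation*}
|X| + |Z| = 2L_p, \qquad |X| + 2|Z| = 3L_p.
\end{equation*}
Subtracting the first from the second yields $|Z| = L_p$, and then $|X| = L_p$ follows immediately, giving $|X| = |Y| = |Z| = L_p = (p-1)(p-5)/96$ as claimed. There is no genuine obstacle at this final step: the entire combinatorial content has been absorbed into Theorems 3.1 and 3.2 and the Gauss-lemma-flavoured Corollary 3.4, and the Balanced Pairing Theorem is simply the output of a $2 \times 2$ linear algebra calculation. The only bookkeeping worth double-checking is the arithmetic identity $[(p-1)/4]^2 - (p-1)/4 = 6L_p$, which one verifies directly from $L_p = (p-1)(p-5)/96$.
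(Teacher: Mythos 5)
Your proposal is correct and follows exactly the paper's intended route: the paper derives Theorem 3.5 by combining the identities of Theorem 3.1, Theorem 3.2, and Corollary 3.4, which is precisely the $2\times 2$ linear system you solve. Your write-up is in fact more explicit than the paper's (which leaves the final elimination to the reader), and the arithmetic check $[(p-1)/4]^2-(p-1)/4=(p-1)(p-5)/16=6L_p$ is right.
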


\section{Conclusion}

First we discuss Problem 1 for $p\eq 3 \pmod{4}$ as promised.

When $p\eq 3 \pmod{4}$, for any $x\in \{1,2,\ldots,(p-1)/2\}$ either $x$ or $p-x$ is a quadratic residue.Therefore for any $m$ prime to p, the mapping $\sigma_m$ sending $x$ to $\T{mx^2}$ is a permutation of the group of positive residues. And as we mentioned in the start of Section 1 this group is cyclic and generated by any primitive root $g$ of $p$ in it. Let's fix one such $g$. Moreover since $(p-1)/2$ is odd, $\T{m}$ is a square in this multiplicative group, say $\T{m}= \T{l^2}.$

Recall that the sign of composition of permutations is the product of their signs. We decompose $\sigma_m$ into four permutations of $ \{1,2,\ldots,(p-1)/2\}$. First send x to index with respect to g. Second we double the index modulo $(p-1)/2$. Then turn back to $\T{x^2}$. Finally multiply m modulo $(p-1)/2$.

Now the first and the third permutations are reverse to each other. The fourth is clear positive since it' the same to multiply $l$ twice. So the sign of $\sigma_m$ equals that of the second step, the parity of which is easy to determine.

Thus when $p\eq 3 \pmod{4}$ the problem quite easy. And in \cite{LH} it's tackled through a different decomposition of $\sigma_a$. It also follows from Theorem 1.4(i) and Corollary 1.3 in Sun's paper \cite{S19}. \\

Next we pose our own problem for further research. We have shown that for every prime $p\eq1\mod{4}$, there exists a balanced pairing of $\{1,2,\ldots,(p-1)/2\}$. Now consider $\{1,2,\ldots,n\}$ for general n, is there a balanced paring for n? For this to be true , since there will
be $(n-1)/2$ pairs and we hope there are $X,Y,Z$ each of size $\binom{(n-1)/2}{2}/3$  it must be $n\eq 0,2 \pmod{6}$. We can ask a lot questions.

\begin{problem}
(i)Find different structure of balanced pairing of set $\{1,2,\ldots,n\}$ for infinitely many $n$.\\

(ii)Investigate the conditions for $n\eq 0,2 \pmod{6}$ such that $\{1,2,\ldots,n\}$ may have a balanced paring.
\end{problem}

Thanks for your reading. Any comment is welcome.

\end{document}